\theoremstyle{definition}
\newtheorem*{theoA}{Theorem A}
\newtheorem*{theoB}{Theorem B}
\newtheorem*{theoC}{Theorem C}
\newtheorem*{theoD}{Theorem D}
\newtheorem*{theoE}{Theorem E}
\newtheorem{theo}{Theorem}[section]
\newtheorem{lem}{Lemma}[section]
\newtheorem{defi}{Definition}[section]
\newtheorem{rem}{Remark}[section]
\newtheorem{ques}{Question}[section]
\newcommand{\ol}{\overline}
\newcommand{\be}{\begin{equation}}
\newcommand{\ee}{\end{equation}}
\newcommand{\beas}{\begin{eqnarray*}}
\newcommand{\eeas}{\end{eqnarray*}}
\newcommand{\bea}{\begin{eqnarray}}
\newcommand{\eea}{\end{eqnarray}}
\newcommand{\lra}{\longrightarrow}
\numberwithin{equation}{section}
\begin{document}
\title[Some uniqueness results related to  the Br\"{u}ck Conjecture]{Some uniqueness results related to  the Br\"{u}ck Conjecture}
\date{}
\author[B. Chakraborty ]{Bikash Chakraborty }
\date{}
\address{Department of Mathematics, Ramakrishna Mission Vivekananda Centenary College, West Bengal 700 118, India.}
\email{bikashchakraborty.math@yahoo.com, bikash@rkmvccrahara.org}
\maketitle
\let\thefootnote\relax
\footnotetext{2010 Mathematics Subject Classification: 30D35.}
\footnotetext{Key words and phrases: Meromorphic function, Br\"{u}ck Conjecture, Sharing values, small function.}
\begin{abstract} Let $f$ be a non-constant meromorphic function and $a=a(z)(\not\equiv 0, \infty)$ be a small function of $f$. Under
certain essential conditions, we obtained similar type conclusion of Br\"{u}ck Conjecture, when $f$ and its differential polynomial $P[f]$ shares $a$ with weight $l(\geq0)$. Our result improves and generalizes a recent result of Li, Yang and Liu.\end{abstract}
\section{Introduction}
Let $f$ and $g$  be two non-constant meromorphic functions in the open complex plane $\mathbb{C}$. If for some $a\in\mathbb{C}\cup\{\infty\}$, $f$ and $g$ have same set of $a$-points with the same multiplicities, we say that $f$ and $g$ share the value $a$ CM (counting multiplicities) and if we do not consider the multiplicities, then $f$ and $g$ are said to share the value $a$ IM (ignoring multiplicities).  When $a=\infty$, the zeros of $f-a$ means the poles of $f$.\par
A meromorphic function $a(z)(\not\equiv 0, \infty)$ is called a small function with respect to $f$ provided that $T(r,a)=S(r,f)$ as $r\lra \infty, r\not\in E$, where $E$ is any set of positive real numbers whose Lebesgue measure is finite.\par
If $a=a(z)$ is a small function, then we say that $f$ and $g$ share $a$ IM or $a$ CM according as $f-a$ and $g-a$ share $0$ IM or $0$ CM respectively.\par
The hyper order $\rho _{2}(f)$ of a non-constant meromorphic function $f$ is defined by \beas \rho _{2}(f)=\limsup\limits_{r\lra\infty}\frac{ \log \log T(r,f)}{\log r}.\eeas
In connection to find the relation between an entire function with its derivative when they share one value CM, in 1996, in this direction the following famous conjecture was proposed by Br\"{u}ck \cite{3}:\\ \par
{\it {\bf Conjecture:} Let $f$ be a non-constant entire function such that the hyper order $\rho _{2}(f)$ of $f$ is not a positive integer or infinite. If $f$ and $f^{'}$ share a finite value $a$ CM, then $\frac{f^{'}-a}{f-a}=c$, where $c$ is a non-zero constant.}\\ \par
Br\"{u}ck himself proved the conjecture for $a=0$ and for $a=1$, he showed that under the assumption $N(r,0;f^{'})=S(r,f)$ the conjecture was true.
\begin {theoA}\cite{3} Let $f$ be a non-constant entire function. If $f$ and $f^{'}$ share the value $1$ CM and if $N(r,0;f^{'})=S(r,f)$, then $\frac{f^{'}-1}{f-1}$ is a nonzero constant.
\end {theoA}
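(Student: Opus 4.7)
The plan is to show that $\varphi := (f'-1)/(f-1)$ is a nonzero constant, which is equivalent to the stated conclusion. I would proceed in three stages.

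First, I would verify that $\varphi$ is entire and nowhere zero. Since $f$ and $f'$ share $1$ CM, every zero of $f-1$ of multiplicity $m$ is matched by a zero of $f'-1$ of the same multiplicity, and vice versa, so in the quotient both denominator zeros and numerator zeros cancel; combined with $f$ entire, this gives $\varphi$ entire with no zeros. A subsidiary observation, useful later, is that every $1$-point of $f$ must be simple: a multiple $1$-point $z_0$ would force $f'(z_0)=0$, contradicting the fact that $z_0$ is also a zero of $f'-1$ by CM sharing.

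Second, I would bound $T(r,\varphi)$. Writing $\varphi = f'/(f-1) - 1/(f-1)$, the logarithmic derivative lemma gives $m(r, f'/(f-1)) = S(r,f)$. Using $m(r, 1/(f-1)) = T(r,f) - N(r, 1; f) + O(1)$ and that $\varphi$ has no poles, I obtain
\[ T(r,\varphi) \leq T(r,f) - N(r, 1; f) + S(r,f). \]
On the other hand, Nevanlinna's second main theorem applied to $f'$ with targets $\infty, 0, 1$, together with $N(r,\infty;f')=0$, the hypothesis $N(r,0;f')=S(r,f)$, and the CM-plus-simplicity identity $\bar N(r,1;f') = N(r,1;f)$, yields $T(r, f') \leq N(r, 1; f) + S(r,f)$. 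Combining these two inequalities with the standard relation $T(r,f) = T(r,f') + S(r,f)$ for entire $f$ produces $T(r,\varphi) = S(r,f)$.

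The main obstacle is the third stage: upgrading ``$\varphi$ is entire, zero-free, and $T(r,\varphi) = S(r,f)$'' to ``$\varphi$ is a constant''. Since $\varphi$ is entire and nowhere zero, one has $\varphi = e^g$ for an entire $g$, and $g' = \varphi'/\varphi = f''/(f'-1) - f'/(f-1)$ is entire (at each simple $1$-point the two terms contribute cancelling residues of $+1$) with $T(r, g') = S(r,f)$ by the logarithmic derivative lemma. To conclude, I would exploit the identity $g'(f'-1) = f'' - \varphi f'$, which at every zero of $f'$ forces $f'' = -g'$, together with the hypothesis $N(r, 0; f') = S(r,f)$, to argue $g' \equiv 0$ and hence $\varphi$ constant. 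This closing step is the most delicate: it requires a careful comparison of the growth of the small function $g'$ with the sparse zero set of $f'$, typically invoking order- or hyper-order-type considerations to promote an entire, zero-free function of Nevanlinna characteristic $S(r,f)$ to a genuine constant.
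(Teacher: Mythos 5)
The paper does not prove Theorem A; it is Brück's theorem, quoted from \cite{3} purely as background, so there is no in-paper argument to compare yours against. Judged on its own terms, your outline has the right skeleton (set $\varphi=(f'-1)/(f-1)=e^{g}$, show $\varphi$ is entire and zero-free and the shared $1$-points are simple, try to make $\varphi$ small, then kill $g'$), and Stage 1 is correct; but Stages 2 and 3 each contain a genuine gap.

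In Stage 2 you invoke ``the standard relation $T(r,f)=T(r,f')+S(r,f)$ for entire $f$.'' Only the direction $T(r,f')\le T(r,f)+S(r,f)$ is standard (via $m(r,f')\le m(r,f)+m(r,f'/f)$). The reverse direction is not a general fact: unconditionally one only gets $T(r,f)\le T(r,f')+\overline{N}(r,0;f)+S(r,f)$, and nothing in the hypotheses controls $\overline{N}(r,0;f)$ (the hypothesis $N(r,0;f')=S(r,f)$ bounds the multiple zeros of $f$ but not the simple ones). Since your conclusion $T(r,\varphi)=S(r,f)$ rests on exactly this reverse inequality, Stage 2 is unproved as written. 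Stage 3 is not an argument but an admission that one is missing, and the mechanism you gesture at points the wrong way: the identity $g'(f'-1)=f''-\varphi f'$ does show that $f''+g'$ vanishes on the zero set of $f'$, but that set is \emph{sparse} by hypothesis, and an entire function vanishing on a sparse set need not vanish identically; likewise an entire zero-free function of characteristic $S(r,f)$ need not be constant (compare $e^{z}$ against $e^{e^{z}}$), so no order or hyper-order consideration will promote $\varphi$ to a constant. The hypothesis $N(r,0;f')=S(r,f)$ is used in the opposite way in Brück-type arguments: divide your identity by $f'$ to get $\frac{g'}{f'}=g'+\varphi-\frac{f''}{f'}$; the right-hand side has proximity function $S(r,f)$ once $T(r,\varphi)=S(r,f)$ is established, while $N\left(r,\infty;\frac{g'}{f'}\right)\le N(r,0;f')=S(r,f)$ because $g'$ is entire, so $T\left(r,\frac{g'}{f'}\right)=S(r,f)$; hence if $g'\not\equiv 0$ then $T(r,f')=S(r,f)$, and then $f-1=e^{-g}(f'-1)$ forces $T(r,f)=S(r,f)$, a contradiction. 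You need both an honest proof that $T(r,\varphi)=S(r,f)$ and this (or an equivalent) closing computation before the argument is complete.
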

However for entire function of finite order, Yang \cite{11} removed the supposition $N(r,0;f^{'})=0$ and obtained the following result.
\begin {theoB}\cite{11} Let $f$ be a non-constant entire function of finite order and let $a(\not=0)$ be a finite constant. If $f$, $f^{(k)}$ share the value $a$ CM, then $\frac{f^{(k)}-a}{f-a}$ is a nonzero constant, where $k(\geq 1)$ is an integer.
\end{theoB}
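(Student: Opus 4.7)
My plan is to set $\phi(z):=(f^{(k)}(z)-a)/(f(z)-a)$ and to show in two steps that (i) $\phi$ is of the form $e^{P}$ for some polynomial $P$, and (ii) $P$ is constant.

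For (i), observe first that the CM-sharing condition together with the entireness of $f$ makes $\phi$ a zero-free entire function: every potential pole or zero of $\phi$ comes from a zero of $f-a$ or of $f^{(k)}-a$, and sharing $a$ CM forces these to cancel with equal multiplicity in both numerator and denominator. The First Fundamental Theorem then gives
\[ T(r,\phi)\leq T(r,f^{(k)})+T\!\left(r,\tfrac{1}{f-a}\right)+O(1)\leq 2T(r,f)+S(r,f), \]
and the finite-order hypothesis on $f$ makes $\phi$ of finite order as well. Hadamard's factorization theorem now yields $\phi=e^{P}$, with $P$ a polynomial of degree at most $\rho(f)$.

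For (ii), assume $n:=\deg P\geq 1$ and seek a contradiction via Wiman--Valiron theory. Rewriting the defining identity as
\[ \frac{f^{(k)}(z)}{f(z)}=e^{P(z)}+\frac{a(1-e^{P(z)})}{f(z)}, \]
and evaluating at Wiman--Valiron central points $z_{r}$, with $|z_{r}|=r$ and $|f(z_{r})|=M(r,f)$, the left-hand side is $(\nu(r,f)/z_{r})^{k}(1+o(1))$ with the central index $\nu(r,f)=O(r^{\rho(f)+\varepsilon})$. Since $M(r,f)\to\infty$, this forces $|e^{P(z_{r})}|$ to be polynomially bounded, hence $\mathrm{Re}\,P(z_{r})=O(\log r)$ along the central points. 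On the other hand, choosing a subsequence of radii along which $\arg z_{r}\to\theta_{0}$ with $\mathrm{Re}(\alpha_{n}e^{in\theta_{0}})>0$ (where $\alpha_{n}$ is the leading coefficient of $P$) produces $\mathrm{Re}\,P(z_{r})\gtrsim r^{n}$, the desired contradiction. Hence $n=0$, and $\phi$ is a nonzero constant.

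The anticipated main obstacle is exactly the last step of (ii): one has to arrange that the central points $z_{r}$ fall, for infinitely many $r$, in a sector where $\mathrm{Re}\,P$ grows positively. A robust alternative would be to combine Hadamard's factorization $f-a=Q(z)e^{L(z)}$ with the consequence $f^{(k)}-a=Q(z)e^{L(z)+P(z)}$ of the CM-sharing, and compare it against the explicit formula for $(Qe^{L})^{(k)}$; the resulting identity, viewed as an equation between entire functions of distinct orders unless $\deg P=0$, then forces $P$ to be constant.
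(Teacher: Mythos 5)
This statement is Theorem~B of the paper, which is quoted from Yang's article \cite{11} and is not proved in the paper at all, so there is no internal proof to compare against; I can only assess your argument on its own terms. Part (i) is correct and standard: CM sharing makes $\phi=(f^{(k)}-a)/(f-a)$ a zero-free entire function, the characteristic estimate bounds its order by $\rho(f)<\infty$, and Hadamard gives $\phi=e^{P}$ with $P$ a polynomial. The genuine gap is in part (ii), and it is exactly the one you flag: you have no control over $\arg z_{r}$ for the Wiman--Valiron central points. Passing to a subsequence gives \emph{some} limit direction $\theta_{0}$, but nothing forces $\mathrm{Re}\,(\alpha_{n}e^{in\theta_{0}})>0$; the maximum-modulus points of $f$ may perfectly well cluster on, or near, the critical rays of $P$ (or in a sector where $\mathrm{Re}\,P\to-\infty$). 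What the Wiman--Valiron identity actually yields is $e^{P(z_{r})}=(\nu(r)/z_{r})^{k}(1+o(1))$, hence only $|\mathrm{Re}\,P(z_{r})|=O(\log r)$, i.e.\ a confinement of the central points to shrinking neighbourhoods of the critical rays --- a constraint, not a contradiction. So the proof as written does not close.

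Your fallback is also not sound as stated: writing $f-a=Q(z)e^{L(z)}$ with $Q$ a polynomial presupposes that $f-a$ has finitely many zeros, which is not given; in general $Q$ is an infinite canonical product and the ``comparison of orders'' between $e^{L}\cdot(\text{differential polynomial in }Q,L)$ and $Qe^{L+P}$ no longer forces $\deg P=0$. The route actually taken in \cite{11} is to set $w=f-a$, so that the sharing hypothesis becomes the differential equation $w^{(k)}-e^{P}w=a$, and then to prove that this equation has no entire solution of finite order when $P$ is a nonconstant polynomial (this is the ``solution of a differential equation'' in the title of \cite{11}); that step requires a genuinely finer analysis --- differentiating the equation to eliminate $e^{P}$ and applying Clunie/logarithmic-derivative estimates, or a sectorial Phragm\'{e}n--Lindel\"{o}f argument --- and cannot be replaced by the single evaluation at central points that you propose.
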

Zhang \cite{14} extended {\it Theorem A} to meromorphic function and also studied the CM value sharing of a meromorphic function with its $k$-th derivative.\par
Meanwhile a new notion of scalings between CM and IM known as weighted sharing (\cite{6}), appeared in the uniqueness literature.
\begin{defi} \cite{6} Let $k$ be a non-negative integer or infinity. For $a\in\mathbb{C}\cup\{\infty\}$, we denote by $E_{k}(a;f)$, the set of all $a$-points of $f$, where an $a$-point of multiplicity $m$ is counted $m$ times if $m\leq k$ and $k+1$ times if $m>k$. If $E_{k}(a;f)=E_{k}(a;g)$, we say that $f$ and $g$ share the value $a$ with weight $k$.
\end{defi}
The definition implies that if $f$ and $g$ share a value $a$ with weight $k$, then $z_{0}$ is an $a$-point of $f$ with multiplicity $m\;(\leq k)$ if and only if it is an $a$-point of $g$ with multiplicity $m\;(\leq k)$ and $z_{0}$ is an $a$-point of $f$ with multiplicity $m\;(>k)$ if and only if it is an $a$-point of $g$ with multiplicity $n\;(>k)$, where $m$ is not necessarily equal to $n$.\par

We write $f$, $g$ share $(a,k)$ to mean that $f$, $g$ share the value $a$ with weight $k$. Clearly if $f$, $g$ share $(a,k)$, then $f$, $g$ share $(a,p)$ for any integer $p$, $0\leq p<k$. Also we note that $f$, $g$ share a value $a$ IM or CM if and only if $f$, $g$ share $(a,0)$ or $(a,\infty)$ respectively.\par
Though out this paper, we use the standard notations and definitions of the value distribution theory available in \cite{4}. Also we explain some definitions and notations which are used in this paper.
\begin{defi}\cite{8} Let $p$ be a positive integer and $a\in\mathbb{C}\cup\{\infty\}$.
\begin{enumerate}
\item[(i)] $N(r,a;f\mid \geq p)$ ($\ol N(r,a;f\mid \geq p)$) denotes the counting function (reduced counting function) of those $a$-points of $f$ whose multiplicities are not less than $p$.\item[(ii)]$N(r,a;f\mid \leq p)$ ($\ol N(r,a;f\mid \leq p)$) denotes the counting function (reduced counting function) of those $a$-points of $f$ whose multiplicities are not greater than $p$.
\end{enumerate}
\end{defi}
\begin{defi}\{6, cf.\cite {12}\} For $a\in\mathbb{C}\cup\{\infty\}$ and a positive integer $p$ we denote by $N_{p}(r,a;f)$ the sum $\ol N(r,a;f)+\ol N(r,a;f\mid\geq 2)+\ldots\ol +N(r,a;f\mid\geq p)$. Clearly $N_{1}(r,a;f)=\ol N(r,a;f)$. \end{defi}
\begin{defi} Let $k$ be a positive integer and for $a\in\mathbb{C}\setminus\{0\}$, $\ol E_{k)}(a;f)=\ol E_{k)}(a;g)$. Let $z_{0}$ be a zero of $f(z)-a$ of multiplicity $p$ and a zero of $g(z)-a$ of multiplicity $q$.
\begin{enumerate}
\item[(i)] We denote by $\ol N_{L}(r,a;f)$ the counting function of those $a$-points of $f$ and $g$ where $p>q\geq 1$,
\item[(ii)] by $\ol N_{f>s}(r,a;g)$ (resp. $\ol N_{g>s}(r,a;f)$) the counting functions of those $a$-points of $f$ and $g$ for which $p>q=s$ (resp. $q>p=s$),
\item[(iii)] by $N^{1)}_{E}(r,a;f)$ the counting function of those $a$-points of $f$ and $g$ where $p=q=1$ and
\item[(iv)] by $\ol N^{(2}_{E}(r,a;f)$ the counting function of those $a$-points of $f$ and $g$ where $p=q\geq 2$, each point in these counting functions is counted only once.
\end{enumerate}
In the same way, we can define $\ol N_{L}(r,a;g),\; N^{1)}_{E}(r,a;g),\; \ol N^{(2}_{E}(r,a;g).$ We denote by $\ol N_{f\geq k+1}(r,a;f\mid\; g\not=a)$ (resp. $\ol N_{g\geq k+1}(r,a;g\mid\; f\not=a)$) the reduced counting functions of those $a$-points of $f$ and $g$ for which $p\geq k+1$ and $q=0$ (resp. $q\geq k+1$ and $p=0$).
\end{defi}
\begin{defi}\cite{7} Let $a,b \in\mathbb{C}\;\cup\{\infty\}$. We denote by $N(r,a;f\mid\; g\neq b)$ the counting function of those $a$-points of $f$, counted according to multiplicity, which are not the $b$-points of $g$.\end{defi}
\begin{defi}\cite{6} Let $f$, $g$ share a value $a$ IM. We denote by $\ol N_{*}(r,a;f,g)$ the reduced counting function of those $a$-points of $f$ whose multiplicities differ from the multiplicities of the corresponding $a$-points of $g$.

Clearly $\ol N_{*}(r,a;f,g)\equiv\ol N_{*}(r,a;g,f)$ and $\ol N_{*}(r,a;f,g)=\ol N_{L}(r,a;f)+\ol N_{L}(r,a;g)$.
\end{defi}

\begin{defi} (\cite{10}) For $a \in \mathbb{C}\cup\{\infty\}$ and a positive integer p, we put\\
$$\delta_{p}(a,f)=1-\limsup\limits_{r \to \infty} \frac{{N}_{p}(r,a;f)}{T(r,f)}.$$\\ Clearly $0\leq \delta(a,f) \leq \delta_{p}(a,f) \leq \delta_{p-1}(a,f) \leq...\leq \delta_{2}(a,f) \leq \delta_{1}(a,f)=\Theta(a,f)\leq1$ .
\end{defi}
In 2004, Lahiri-Sarkar \cite{8} employed weighted value sharing method to improve the results of Zhang \cite{14}.
In 2005, Zhang \cite{15} further extended the results of Lahiri-Sarkar to a small function and proved the following result for IM sharing.
\begin{theoC}\cite{15} Let $f$ be a non-constant meromorphic function and $k(\geq 1)$ and $l(\geq 0)$ be integer. Also let $a\equiv a(z)$ ($\not\equiv 0,\infty$) be a meromorphic small function. Suppose that $f-a$ and $f^{(k)}-a$ share $(0,l)$. If $l\geq2$ and
\be\label{e1.1} 2\ol N(r,\infty;f)+N_{2}\left(r,0;f^{(k)}\right)+ N_{2}\left(r,0;(f/a)^{'}\right)<(\lambda +o(1))\;T\left(r,f^{(k)}\right)\ee
or, $l=1$ and
\be\label{e1.1} 2\ol N(r,\infty;f)+N_{2}\left(r,0;f^{(k)}\right)+ 2\overline{N}\left(r,0;(f/a)^{'}\right)<(\lambda +o(1))\;T\left(r,f^{(k)}\right)\ee
or, $l=0$ and
\be\label{e1.1} 4\ol N(r,\infty;f)+3N_{2}\left(r,0;f^{(k)}\right)+ 2\overline{N}\left(r,0;(f/a)^{'}\right)<(\lambda +o(1))\;T\left(r,f^{(k)}\right)\ee
for $r\in I$, where $0<\lambda <1$ and $I$ is a set of infinite linear measure, then $\frac{f^{(k)}-a}{f-a}=c$  for some constant $c\in\mathbb{C}/\{0\}$.
\end{theoC}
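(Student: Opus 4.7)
The plan is to normalize the problem: set $F = f/a$ and $G = f^{(k)}/a$, so that $F$ and $G$ share the value $1$ with weight $l$ (up to an $S(r,f)$ discrepancy coming from the zeros and poles of $a$). Introduce the standard Nevanlinna auxiliary function
$$H \,=\, \left(\frac{F''}{F'} - \frac{2F'}{F-1}\right) - \left(\frac{G''}{G'} - \frac{2G'}{G-1}\right);$$
by the lemma on logarithmic derivatives, $m(r,H)=S(r,F)+S(r,G)$, so the analysis of $H$ reduces to the analysis of its poles. The entire argument will turn on the dichotomy $H\equiv 0$ versus $H\not\equiv 0$.

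If $H\equiv 0$, two successive integrations yield a M\"{o}bius-type relation $\frac{1}{G-1}=\frac{A}{F-1}+B$ with $A\ne 0$. The sub-case $B=0$ immediately yields $\frac{f^{(k)}-a}{f-a}=\frac{1}{A}$, which is the desired conclusion. For $B\ne 0$, the relation forces an exceptional value for $G$ (a finite one when $B\ne -1$, and $\infty$ when $B=-1$); combining the second fundamental theorem on $G$ with this exceptional value, with the sharing information, and with the standing hypothesis, one should deduce $T(r,G)=S(r,G)$, contradicting the fact that $f^{(k)}$ is nonconstant.

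If instead $H\not\equiv 0$, a local computation at a common simple $1$-point of $F$ and $G$ shows $H$ vanishes there, so
$$\ol N_{E}^{1)}(r,1;F) \;\leq\; N(r,0;H) \;\leq\; N(r,H)+S(r).$$
The possible poles of $H$ are restricted to: poles of $F$ or $G$; multiple zeros of $F$ or $G$; the ``badly shared'' $1$-points (absorbed into $\ol N_*(r,1;F,G)$ together with $\ol N_{F\geq l+1}(r,1;F\mid G\ne 1)$ and $\ol N_{G\geq l+1}(r,1;G\mid F\ne 1)$); and extraneous zeros of $F'$ or $G'$. Substituting these pole counts into the second fundamental theorem for $G$ with targets $0,1,\infty$, invoking $\ol N(r,1;G)\leq \ol N(r,1;F)$ together with the weighted-sharing split of the latter, should produce a bound of the shape
$$T(r,G) \;\leq\; c_1\,\ol N(r,\infty;f) + c_2\,N_2(r,0;f^{(k)}) + c_3\,N_2\bigl(r,0;(f/a)'\bigr) + S(r,f^{(k)}),$$
where $(c_1,c_2,c_3)$ match the left-hand coefficients appearing in the strict inequality of the hypothesis for each of the three regimes $l\geq 2$, $l=1$, $l=0$. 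The hypothesis then contradicts $T(r,f^{(k)})\ne S(r,f^{(k)})$, forcing $H\equiv 0$ and reducing to the previous case.

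The main obstacle is the accurate pole-counting of $H$ at the shared $1$-points: as $l$ decreases, the sharing at $1$ yields less structural information, so the badly-shared contributions $\ol N_L(r,1;F)$, $\ol N_L(r,1;G)$, $\ol N_{F\ge l+1}(\cdot)$ and $\ol N_{G\ge l+1}(\cdot)$ must be estimated by Milloux-type bounds involving $N_2(r,0;(f/a)')$ and $N_2(r,0;f^{(k)})$. It is exactly this degradation that explains why the coefficients $(c_1,c_2,c_3)$ grow from $(2,1,1)$ in the case $l\geq 2$ to $(4,3,2)$ in the case $l=0$; keeping them small enough to be absorbed by the factor $\lambda<1$ in the hypothesis is the technical heart of the argument.
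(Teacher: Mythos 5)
Your outline is sound and follows essentially the same strategy that this paper uses for its own main result (Theorem \ref{t1}), of which Theorem C is the prototype: the paper itself only quotes Theorem C from Zhang \cite{15} without proof, but its proof of Theorem \ref{t1} uses exactly your normalization $F=f/a$, $G=P[f]/a$, the same auxiliary function $H$ from (\ref{e2.1}), the dichotomy $H\equiv 0$ (integrate to a M\"obius relation, kill the $D\neq 0$ branch via the induced exceptional value and the second fundamental theorem) versus $H\not\equiv 0$ (second fundamental theorem plus the pole-counting Lemmas \ref{l11}--\ref{l13}), with the coefficient degradation as $l$ drops handled just as you describe. No substantive divergence to report.
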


Let $a_{j}~(j = 0, 1,\ldots , k - 1)$ are small meromorphic functions of $f$. We define
$$L(f)=f^{(k)}+a_{k-1}f^{(k-1)} + \ldots+ a_{0}f.$$
In 2007, Zhang and Yang (\cite{zy}) obtained the following result:
\begin{theoD}\cite{zy} Let $f$ be a non-constant meromorphic function and $k(\geq 1)$ and $l(\geq 0)$ be integer. Also let $a\equiv a(z)$ ($\not\equiv 0,\infty$) be a meromorphic small function. Suppose that $f-a$ and $L(f)-a$ share $(0,l)$. If $l\geq2$ and
\be\label{e1.1} \delta_{2+k}(0,f)+ \delta_{2}(0,f)+3\theta(\infty,f)+ \delta(a,f)>4,\ee
or, $l=1$ and
\be\label{e1.1} \delta_{2+k}(0,f)+ \delta_{2}(0,f)+\frac{1}{2}\delta_{1+k}(0,f)+\frac{k+7}{2}\theta(\infty,f)+ \delta(a,f)>\frac{k}{2}+5,\ee
or, $l=0$ and
\be\label{e1.1} \delta_{2+k}(0,f)+2\delta_{1+k}(0,f)+ \delta_{2}(0,f)+\Theta(0,f)+(6+2k)\theta(\infty,f)+ \delta(a,f)>2k+10,\ee
then $f=L(f)$  for some constant $c\in\mathbb{C}/\{0\}$.
\end{theoD}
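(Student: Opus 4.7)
The plan is to reduce to the standard weighted-sharing machinery by normalizing the shared small function. Set $F=f/a$ and $G=L(f)/a$; since $a(\not\equiv0,\infty)$ is a small function of $f$ (and hence of $L(f)$), the hypothesis ``$f-a$ and $L(f)-a$ share $(0,l)$'' becomes ``$F$ and $G$ share $(1,l)$'', while every Nevanlinna functional of $F$ differs from the corresponding one of $f$ by an $S(r,f)$ term. (I read the intended conclusion as $\frac{L(f)-a}{f-a}=c$ for some $c\in\mathbb{C}\setminus\{0\}$, in parallel with Theorem C; the printed ``$f=L(f)$ for some constant $c$'' appears to be a transcription slip.)

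I would then introduce the Lahiri auxiliary function
$$H=\left(\frac{F''}{F'}-\frac{2F'}{F-1}\right)-\left(\frac{G''}{G'}-\frac{2G'}{G-1}\right)$$
and aim to show $H\equiv0$. If $H\not\equiv0$, the lemma on logarithmic derivatives yields $m(r,H)=S(r,f)$, and a pole analysis shows
$$N(r,H)\leq \ol N(r,\infty;f)+N_{k+2}(r,0;f)+N_{2}(r,0;L(f))+\ol N(r,a;f)+(\text{sharing defects}),$$
where the sharing defects are combinations of $\ol N_L(r,1;F)$, $\ol N_L(r,1;G)$, $\ol N_E^{(2}(r,1;F)$ and $\ol N_{F\geq l+1}(r,1;F\mid G\neq 1)$. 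Pairing this with the second fundamental theorem applied to $F$ and $G$ at $0,1,\infty$, and using the routine bound $N_{2}(r,0;L(f))\leq N_{k+2}(r,0;f)+k\,\ol N(r,\infty;f)+S(r,f)$ to convert everything into $f$-quantities, one arrives at an inequality of the form $T(r,f)\leq[\text{combination of deficiencies}]\,T(r,f)+S(r,f)$. The three thresholds $l\geq 2$, $l=1$, $l=0$ control the sharing defects by increasingly weaker amounts and thus produce, respectively, the three deficiency sums in the statement; each hypothesis therefore contradicts $H\not\equiv 0$.

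With $H\equiv 0$, two successive integrations give
$$\frac{1}{F-1}=\frac{A}{G-1}+B$$
for constants $A\neq 0$ and $B$. A standard four-case analysis on the vanishing of $B$ and of $A-B$ rules out every scenario except the Möbius relation $G-1=c(F-1)$, i.e.\ $L(f)-a=c(f-a)$ with $c\in\mathbb{C}\setminus\{0\}$; in each excluded case $F$ or $G$ would omit a Picard-type value, contradicting $\delta(a,f)>0$ or $\theta(\infty,f)>0$ furnished by the hypotheses.

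The main obstacle is the bookkeeping in the $H\not\equiv 0$ case: the three inequalities differ only in how the $(1,l)$-sharing counting functions $\ol N_L$, $\ol N_E^{(2}$, $N_E^{1)}$ and $\ol N_{F\geq l+1}(r,1;F\mid G\neq 1)$ are controlled, and reproducing the specific numerical coefficients $\tfrac{1}{2}$, $\tfrac{k+7}{2}$, $6+2k$ appearing in the three deficiency sums requires careful tracking of how many times each $a$-point, zero, and pole of $f$ contributes to each counting function. Once that bookkeeping is correctly organized the rest of the argument is routine.
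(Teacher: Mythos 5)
Note first that Theorem D is quoted from Zhang--Yang \cite{zy}; the present paper does not reprove it, so the only internal benchmark is the proof of Theorem \ref{t1}, which generalizes it. Your overall architecture (setting $F=f/a$, $G=L(f)/a$, introducing the auxiliary function $H$ of (\ref{e2.1}), combining the second fundamental theorem with counting lemmas when $H\not\equiv0$, and integrating $H$ when $H\equiv0$) is exactly the architecture of that proof, and your reading of the conclusion as $\frac{L(f)-a}{f-a}=c$ is the correct repair of the misprint in the statement.

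Two of your steps, however, would fail as written. First, in your bound for $N(r,H)$ the term $\ol N(r,a;f)=\ol N(r,1;F)$ must not appear: the entire point of the method is that common $1$-points of $F$ and $G$ of equal multiplicity are zeros of the numerators of both logarithmic-derivative blocks in $H$ and contribute nothing to $N(r,H)$; only the mismatch terms $\ol N_{L}(r,1;F)+\ol N_{L}(r,1;G)$ survive (compare Lemma \ref{l11}). Charging all of $\ol N(r,1;F)$ to $N(r,H)$ adds an extra full $T(r,f)$ to the right-hand side, and none of the three deficiency thresholds can then be reached. Second, your endgame for $H\equiv0$ is not correct as stated: the hypotheses do not furnish $\delta(a,f)>0$ (for instance $\delta_{2+k}(0,f)=\delta_{2}(0,f)=\theta(\infty,f)=1$ and $\delta(a,f)=0$ satisfies the $l\geq2$ condition, since $1+1+3+0=5>4$), so the excluded cases cannot be dismissed by ``a Picard-type omitted value contradicting $\delta(a,f)>0$.'' The argument actually needed (Case 2 of the paper's proof of Theorem \ref{t1}) runs as follows: if $D\neq0$, then poles of $F$ are $(1-C/D)$-points of $G$, which forces $\ol N(r,\infty;f)=S(r,f)$; then either $C/D\neq1$, in which case the second fundamental theorem applied to $G$ at $0$, $1-C/D$, $\infty$ gives $T(r,G)\leq N_{2}(r,0;G)+S(r,f)$ and one must run the counting lemmas a second time to contradict the deficiency hypothesis, or $C/D=1$, in which case $\left(F-1-\frac{1}{C}\right)G\equiv-\frac{1}{C}$ and an application of the first fundamental theorem to $\frac{1}{f\left(f-(1+1/C)a\right)}$ yields $T(r,f)=S(r,f)$ unconditionally. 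Neither branch is a one-line Picard argument, and the first branch still consumes the deficiency hypothesis in an essential way. Until these two points are repaired the proposal does not close.
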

\begin{defi} Let $n_{0j},n_{1j},\ldots,n_{kj}$ be non-negative integers. The expression $$M_{j}[f]=(f)^{n_{0j}}(f^{(1)})^{n_{1j}}\ldots(f^{(k)})^{n_{kj}}$$ is called a differential monomial generated by $f$ of degree $d(M_{j})=\sum\limits_{i=0}^{k}n_{ij}$ and weight
$\Gamma_{M_{j}}=\sum\limits_{i=0}^{k}(i+1)n_{ij}$. The sum $$P[f]=\sum\limits_{j=1}^{t}b_{j}M_{j}[f]$$ is called a differential polynomial generated by $f$ of degree $d=\ol{d}(P)=max\{d(M_{j}):1\leq j\leq t\}$
and weight $\Gamma=\Gamma_{P}=max\{\Gamma_{M_{j}}:1\leq j\leq t\}$, where $T(r,b_{j})=S(r,f)$ for $j=1,2,\ldots,t$.\par
The numbers $\underline{d}(P)=min\{d(M_{j}):1\leq j\leq t\}$ and $k$ (the highest order of the derivative of $f$ in $P[f]$) are called respectively the lower degree and \emph{order} of $P[f]$.\par
$P[f]$ is said to be \emph{homogeneous} if $\ol{d}(P)$=$\underline{d}(P)$. Otherwise, $P[f]$ is called non-homogeneous differential polynomial.\par
We denote by $\mu =max\; \{\Gamma _{M_{j}}-d(M_{j}): 1\leq j\leq t\}=max\; \{ n_{1j}+2n_{2j}+\ldots+kn_{kj}: 1\leq j\leq t\}$.
\end{defi}

Recently Li, Yang and Liu (\cite{ly}) improved the above Theorems and obtained the following result:

\begin{theoE}\cite{ly} Let $f$ be a non-constant meromorphic function and $P[f]$ be a non-constant homogeneous differential polynomial of degree $d$ and weight $\Gamma_{P}$ satisfying $\Gamma_{P}\geq (k+2)d-2$. Let  $l(\geq 0)$ be integer. Also let $a\equiv a(z)$ ($\not\equiv 0,\infty$) be a meromorphic small function. Suppose that $f-a$ and $P[f]-a$ share $(0,l)$. If $l\geq2$ and
\be\label{e1.1} d\delta_{2+\Gamma_{P}-d}(0,f^{d})+ \delta_{2}(0,f)+3\theta(\infty,f)+ \delta(a,f)>4\ee
or, $l=1$ and
\be\label{e1.1} d\delta_{2+\Gamma_{P}-d}(0,f^{d})+ \delta_{2}(0,f)+\frac{d}{2}\delta_{1+\Gamma_{P}-d}(0,f^{d})+\frac{7+\Gamma_{P}-d}{2}\theta(\infty,f)+ \delta(a,f)>\frac{\Gamma_{P}+9}{2}\ee
or, $l=0$ and
\be\label{e1.1} d\delta_{2+\Gamma_{P}-d}(0,f^{d})+2d\delta_{1+\Gamma_{P}-d}(0,f^{d})+ \delta_{2}(0,f)+\theta(0,f)+[6+2(\Gamma_{P}-d)]\theta(\infty,f)+ \delta(a,f)>2\Gamma_{P}+8\ee
then $\frac{P[f]-a}{f-a}=c$  for some constant $c\in\mathbb{C}/\{0\}$.\par
Especially, when $l=0$ and (\ref{e1.1}) satisfied, then $P[f]=f$.
\end{theoE}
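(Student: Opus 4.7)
The plan is to adapt the weighted-sharing technique of Lahiri to the setting of a homogeneous differential polynomial sharing a small function. Set $F = f/a$ and $G = P[f]/a$. Since $a$ is a small function, $S(r,F)=S(r,G)=S(r,f)$, and since $P[f]$ is homogeneous of degree $d$, we may write $P[f]/f^{d} = \sum_{j}b_{j}\prod_{i}(f^{(i)}/f)^{n_{ij}}$, whence the lemma of the logarithmic derivative gives $m(r,P[f]/f^{d}) = S(r,f)$. This controls $T(r,P[f])$ in terms of $d\,T(r,f)$ modulo counting functions that feed exactly into the deficiency quantities $\delta_{p}(0,f^{d})$ and $\theta(\infty,f)$ appearing in the hypothesis. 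By construction $F-1$ and $G-1$ share $(0,l)$.

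Next, introduce the Lahiri auxiliary function
$$H \;=\; \left(\frac{F''}{F'} - \frac{2F'}{F-1}\right) \;-\; \left(\frac{G''}{G'} - \frac{2G'}{G-1}\right),$$
and split into two cases. If $H\not\equiv0$, every simple common $1$-point of $F$ and $G$ is a zero of $H$, so $N_{E}^{1)}(r,1;F)\leq N(r,\infty;H)+S(r,f)$. The possible poles of $H$ come only from multiple zeros of $F'$ and $G'$, from poles of $F$ and $G$, and from the multiplicity mismatches counted by $\ol N_{*}(r,1;F,G)$; the contribution of these mismatches to the final estimate depends on $l$ and produces the three distinct weighted inequalities. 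Applying the Second Main Theorem to both $F$ and $G$, replacing $\ol N(r,1;F)$ and $\ol N(r,1;G)$ by $N_{E}^{1)}(r,1;F)+\ol N_{E}^{(2}(r,1;F)+\ol N_{*}(r,1;F,G)$ together with the homogeneous identities $N(r,0;P[f])\leq d\,N_{1+\Gamma_{P}-d}(r,0;f)+S(r,f)$ and the analogous $N_{2}$-estimate, yields an inequality whose left-hand side is precisely the quantity violating the deficiency hypothesis. This contradiction forces $H\equiv0$.

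In the case $H\equiv0$, integrating twice produces $1/(G-1) = A/(F-1) + B$ with constants $A\neq0$ and $B$. Three sub-cases $(A=1,\,B=0)$, $(B\neq0,\,A=B)$ and $(B\neq0,\,A\neq B)$ must be examined; in the latter two, the Second Main Theorem applied to $F$ (or $G$) together with the bounds from Step 1 again contradicts the deficiency inequality. Thus $A=1,\,B=0$, which gives $G=F$, equivalently $\frac{P[f]-a}{f-a}$ is a nonzero constant $c$. For the additional $l=0$ conclusion $P[f]=f$, one substitutes $P[f] = a + c(f-a)$ into $P[f]/f^{d}$, compares the Nevanlinna growth of both sides using the homogeneity, and uses the stronger third inequality to rule out $c\neq1$.

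The main obstacle will be the bookkeeping in the $H\not\equiv0$ case: converting poles of $H$, pole and zero counts of $F=f/a$ and $G=P[f]/a$, and the truncated counting functions $N_{2}(r,0;P[f])$, $\ol N(r,\infty;P[f])$ into quantities expressible through $N_{2+\Gamma_{P}-d}(r,0;f^{d})$, $N_{1+\Gamma_{P}-d}(r,0;f^{d})$, $N_{2}(r,0;f)$, $\ol N(r,\infty;f)$, and $\ol N(r,a;f)$, so that the resulting strict inequality exactly matches the hypothesis. Tracking how the weight $l$ controls the contribution of $\ol N_{*}(r,1;F,G)$ — full absorption when $l\geq2$, a factor $\tfrac12$ when $l=1$, and a factor involving $2\Gamma_{P}-2d$ when $l=0$ — is where the three separate statements arise, and where the bulk of the technical work lies.
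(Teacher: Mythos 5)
Theorem E is quoted from Li--Yang--Liu and is not proved in this paper; the paper instead establishes the generalization Theorem 1.1 by exactly the method you describe: $F=f/a$, $G=P[f]/a$, Lahiri's auxiliary function $H$, the Second Fundamental Theorem combined with weighted-sharing lemmas and the counting-function estimates for $N_{p}(r,0;P[f])$ in terms of $N_{p+\Gamma_{P}-d}(r,0;f^{d})$ when $H\not\equiv 0$, and integration of $H\equiv 0$ followed by subcases on the constants. Your sketch matches this route; the only slip is in the $H\equiv 0$ branch, where the surviving case is simply $B=0$ (which gives $(G-1)/(F-1)$ equal to a nonzero constant, not necessarily $G=F$), and your three listed subcases omit the possibility $B=0$, $A\neq 1$ --- harmlessly, since that case also yields the desired conclusion.
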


\begin{ques} Can Br\"{u}ck type conclusion be obtained when homogeneous differential polynomial is replaced by arbitrary differential polynomial in Theorem E ?
\end{ques}

\begin{theo}\label{t1}  Let $f$ be a non-constant meromorphic function and $P[f]$ be a non-constant differential polynomial of degree $\ol{d}(P)$ and weight $\Gamma$ satisfying $ \Gamma> (k+1) \underline{d}(P)-2$. Let  $l(\geq 0)$ be integer. Also let $a\equiv a(z)$ ($\not\equiv 0,\infty$) be a meromorphic small function. Suppose that $f-a$ and $P[f]-a$ share $(0,l)$. If $l\geq2$ and
\be\label{be1.1} 3\Theta(\infty,f)+\delta_{2}(0,f)+\underline{d}(P)\delta_{_{2+\Gamma-\underline{d}(P)}}(0;f)+\delta(a,f)
> 4\ee
or, $l=1$, $2\underline{d}(P)>\overline{d}(P)$ and
\bea\label{be1.2} &&\frac{7+\Gamma-\underline{d}(P)}{2}\Theta(\infty,f)+\frac{\underline{d}(P)}{2}\delta_{_{1+\Gamma-\underline{d}(P)}}(r,0;f^{\underline{d}(P)})+
\underline{d}(P)\delta_{_{2+\Gamma-\underline{d}(P)}}(r,0;f^{\underline{d}(P)})\\
\nonumber&&\hspace{4.7 cm}+\delta_{2}(0,f)+\delta(a,f)> \frac{9+\Gamma}{2}+\overline{d}(P)-\underline{d}(P),\eea
or, $l=0$, $5\underline{d}(P)>4\overline{d}(P)$ and
\bea\label{be1.3}&&2(\Gamma-\underline{d}(P)+3)\Theta(\infty,f)+\Theta(0,f)+\delta_{2}(0,f)+\underline{d}(P)\delta_{_{2+\Gamma-\underline{d}(P)}}(r,0;f^{\underline{d}(P)})\\
\nonumber&& +2\underline{d}(P)\delta_{_{1+\Gamma-\underline{d}(P)}}(r,0;f^{\underline{d}(P)})+\delta(a,f)> 2(\Gamma+4)+4(\overline{d}(P)-\underline{d}(P))\eea\\
then $\frac{P[f]-a}{f-a}=c$  for some constant $c\in\mathbb{C}/\{0\}$.
\end{theo}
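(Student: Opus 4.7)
The plan is to follow the Tumura--Clunie / weighted sharing uniqueness machinery adapted to a (possibly non-homogeneous) differential polynomial, as in the proof of Theorem~E. I would normalize by setting $F = f/a$ and $G = P[f]/a$; since $a$ is a small function of $f$, one has $T(r,F) = T(r,f) + S(r,f)$ and an analogous relation for $G$. The hypothesis that $f-a$ and $P[f]-a$ share $(0,l)$ becomes the statement that $F-1$ and $G-1$ share $(0,l)$, and the desired conclusion $\tfrac{P[f]-a}{f-a} = c$ is equivalent to $G-1 = c(F-1)$. The key auxiliary function is
\[
H = \left(\frac{F''}{F'} - \frac{2F'}{F-1}\right) - \left(\frac{G''}{G'} - \frac{2G'}{G-1}\right),
\]
for which the lemma on the logarithmic derivative yields $m(r,H) = S(r,f)$. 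The argument then splits on whether $H \equiv 0$.

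If $H \equiv 0$, I would integrate twice to arrive at a relation $\tfrac{1}{G-1} = \tfrac{A}{F-1} + B$ with constants $A\neq 0$ and $B$. In the subcase $B \neq 0$, applying the second fundamental theorem to $F$ with the three targets $0,\infty$ and $1 - 1/B$, and combining it with the lower bound $T(r,P[f]) \geq \underline{d}(P)\,T(r,f) + S(r,f)$ that follows from a Mokhon'ko--type estimate together with $\Gamma > (k+1)\underline{d}(P) - 2$, should contradict the relevant one of \eqref{be1.1}--\eqref{be1.3}. Hence $B = 0$, which gives $G - 1 = A^{-1}(F - 1)$, i.e.\ the desired conclusion with $c = 1/A$.

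If $H \not\equiv 0$, I would apply the second fundamental theorem to $F$ and to $G$ and add. Every simple common $1$-point of $F$ and $G$ is a zero of $H$, so $N^{1)}_E(r,1;F) \leq N(r,0;H) \leq N(r,\infty;H) + S(r,f)$. The poles of $H$ are controlled by the poles of $F,G$, the common zeros of $F',G'$ outside $F = G = 1$, and the mismatch terms $\overline{N}_L(r,1;F)$, $\overline{N}_L(r,1;G)$, $\overline{N}_{*}(r,1;F,G)$, and, when $l = 0$ or $l = 1$, the extra terms $\overline{N}_{F\geq l+1}(r,1;F\mid G\neq 1)$ plus its symmetric counterpart. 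Assembling the estimates in the standard fashion yields a bound of the shape
\[
T(r,F) + T(r,G) \leq 2\,\overline{N}(r,\infty;f) + N_2(r,0;F) + N_2(r,0;G) + R_l(r) + S(r,f),
\]
where $R_l(r)$ is the weighted-sharing correction carrying the coefficients $\tfrac{7+\Gamma-\underline{d}(P)}{2}$ and $2(\Gamma-\underline{d}(P)+3)$ visible in \eqref{be1.2} and \eqref{be1.3}. The crucial analytic input on the $G$-side is the estimate
\[
N_2(r,0;P[f]) \leq \underline{d}(P)\,N_{2+\Gamma-\underline{d}(P)}(r,0;f^{\underline{d}(P)}) + (\Gamma-\underline{d}(P))\,\overline{N}(r,\infty;f) + (\overline{d}(P)-\underline{d}(P))\,T(r,f) + S(r,f),
\]
whose non-homogeneity penalty $(\overline{d}(P)-\underline{d}(P))\,T(r,f)$ is precisely what produces the additional $\overline{d}(P) - \underline{d}(P)$ on the right-hand side of \eqref{be1.2} and \eqref{be1.3}. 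Converting every counting function to its associated deficiency $\delta_2,\delta_{2+\Gamma-\underline{d}(P)},\delta_{1+\Gamma-\underline{d}(P)},\Theta(\infty,f),\Theta(0,f),\delta(a,f)$ and dividing through by $T(r,f)$ produces a contradiction with the hypothesis corresponding to the value of $l$.

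The principal obstacles I expect are: (i) establishing the sharp lower bound on $T(r,P[f])$ in the non-homogeneous case, which is where the hypothesis $\Gamma > (k+1)\underline{d}(P) - 2$ is used optimally; (ii) tracking the mismatch terms $\overline{N}_L$ and $\overline{N}_*$ under $l = 0$ and $l = 1$ and absorbing them using the auxiliary hypotheses $2\underline{d}(P) > \overline{d}(P)$ and $5\underline{d}(P) > 4\overline{d}(P)$; and (iii) ruling out the residual subcase $B \neq 0$ in the $H \equiv 0$ branch without relying on the homogeneity of $P[f]$.
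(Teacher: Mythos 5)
Your Case 1 ($H\not\equiv 0$) outline matches the paper's argument step for step: the second fundamental theorem applied to $F$ and $G$, the bound on $N(r,\infty;H)$, the mismatch terms $\overline{N}_{L}(r,1;F)$, $\overline{N}_{L}(r,1;G)$ estimated differently for $l\geq 2$, $l=1$, $l=0$, and above all the counting-function estimate
$N_{p}(r,0;P[f])\leq N_{p+\Gamma-\underline{d}(P)}(r,0;f^{\underline{d}(P)})+(\Gamma-\underline{d}(P))\overline{N}(r,\infty;f)+(\overline{d}(P)-\underline{d}(P))\left(m(r,\tfrac{1}{f})+T(r,f)\right)+S(r,f)$,
whose non-homogeneity penalty is exactly what you identify as the source of the $\overline{d}(P)-\underline{d}(P)$ terms and of the auxiliary hypotheses $2\underline{d}(P)>\overline{d}(P)$ and $5\underline{d}(P)>4\overline{d}(P)$. (One bookkeeping slip: the factor $\underline{d}(P)$ should not multiply $N_{2+\Gamma-\underline{d}(P)}(r,0;f^{\underline{d}(P)})$ in that inequality; it only appears after passing to deficiencies, via $T(r,f^{\underline{d}(P)})=\underline{d}(P)T(r,f)$.)

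The genuine gap is in the $H\equiv 0$ branch. Writing $\frac{1}{F-1}=\frac{C}{G-1}+D$, your plan for $D\neq 0$ is a three-target second fundamental theorem with targets $0$, $\infty$ and the auxiliary value. This cannot work uniformly: when $C/D=1$ the auxiliary value $1-C/D$ coincides with $0$, you are left with only two distinct targets, and the second fundamental theorem yields nothing. The paper treats this degenerate subcase by an entirely different device: from $\left(F-1-\frac{1}{C}\right)G\equiv-\frac{1}{C}$ one first gets $N(r,0;f\mid\geq k+1)\leq N(r,0;G)=S(r,f)$ together with $\overline{N}(r,\infty;f)=S(r,f)$, then rewrites the identity as $\frac{1}{f^{\overline{d}(P)}\left(f-(1+1/C)a\right)}\equiv-\frac{C}{a^{2}}\frac{P[f]}{f^{\overline{d}(P)}}$ and applies the first fundamental theorem together with Chuang's estimate $m\left(r,P[f]/f^{\overline{d}(P)}\right)\leq(\overline{d}(P)-\underline{d}(P))m(r,\tfrac{1}{f})+S(r,f)$ and the corresponding bound for $N\left(r,\infty;P[f]/f^{\overline{d}(P)}\right)$, forcing $T(r,f)=S(r,f)$, a contradiction. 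Nothing in your proposal supplies this step (you flag it as an expected obstacle but give no idea for overcoming it). Moreover, even in the non-degenerate subcase $C/D\neq 1$ the second fundamental theorem does not immediately contradict \eqref{be1.1}--\eqref{be1.3}: it only yields $\Theta(\infty,f)=1$ and the vanishing of the relevant deficiencies of $P[f]$, after which a further argument (imported from Li--Yang--Liu) is needed to close the case. As proposed, the $H\equiv 0$, $D\neq 0$ branch of your proof is therefore incomplete.
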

\begin{rem} If $P[f]$ be a non-constant homogeneous differential polynomial, then $\underline{d}(P)=\overline{d}(P)$. Thus our Theorem improves, extends, generalizes Theorem E.
\end{rem}

\section{Lemmas} In this section we present some lemmas which will be needed in the sequel. Let $F$, $G$ be two non-constant meromorphic functions. Henceforth we shall denote by $H$ the following function. \be\label{e2.1}H=\left(\frac{\;\;F^{''}}{F^{'}}-\frac{2F^{'}}{F-1}\right)-\left(\frac{\;\;G^{''}}{G^{'}}-\frac{2G^{'}}{G-1}\right).\ee
\begin{lem}\label{l3}\cite{8bc} $N(r,\infty;P) \leq \overline{d}(P)N(r,\infty;f)+\left(\Gamma_P-\overline{d}(P)\right)\overline{N}(r,\infty;f).$
\end{lem}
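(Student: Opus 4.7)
The plan is to prove this lemma pole-by-pole. Fix a pole $z_0$ of $f$ of multiplicity $p\ge 1$. For each $i\ge 0$, the derivative $f^{(i)}$ then has a pole at $z_0$ of multiplicity exactly $p+i$. Hence the monomial $M_j[f] = (f)^{n_{0j}}(f')^{n_{1j}}\cdots(f^{(k)})^{n_{kj}}$ has a pole at $z_0$ of multiplicity
\[
\sum_{i=0}^{k} n_{ij}(p+i) \;=\; p\sum_{i=0}^{k}n_{ij} \,+\, \sum_{i=0}^{k} i\, n_{ij} \;=\; p\, d(M_j) + \bigl(\Gamma_{M_j}-d(M_j)\bigr),
\]
where I have used the identity $\Gamma_{M_j}-d(M_j)=\sum_{i} i\, n_{ij}$ that is immediate from the definitions of degree and weight of a monomial.

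The next step is to obtain a uniform upper bound independent of $j$. Rewrite the above as $(p-1)d(M_j) + \Gamma_{M_j}$. Since $p-1\ge 0$ and $d(M_j)\le \overline{d}(P)$, the first summand is at most $(p-1)\overline{d}(P)$; since $\Gamma_{M_j}\le \Gamma_P$, the second is at most $\Gamma_P$. Adding the two bounds,
\[
p\, d(M_j) + \Gamma_{M_j} - d(M_j) \;\le\; (p-1)\overline{d}(P)+\Gamma_P \;=\; p\,\overline{d}(P)+\bigl(\Gamma_P-\overline{d}(P)\bigr).
\]
Taking the maximum over $j$ and recalling that the small-function coefficients $b_j$ contribute only $S(r,f)$, the sum $P[f]=\sum_j b_j M_j[f]$ has a pole at $z_0$ of multiplicity at most $p\,\overline{d}(P)+(\Gamma_P-\overline{d}(P))$.

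Summing this local estimate over all poles of $f$ then yields the stated inequality: the $p\,\overline{d}(P)$ part sums to $\overline{d}(P)\, N(r,\infty;f)$, while the additive contribution $\Gamma_P-\overline{d}(P)$ occurs exactly once per distinct pole of $f$ and therefore aggregates to $(\Gamma_P-\overline{d}(P))\,\overline{N}(r,\infty;f)$. There is no serious obstacle here; the only subtlety is recognizing that one must split $p\, d(M_j)$ as $(p-1)d(M_j)+d(M_j)$ so that $p-1\ge 0$ lets one independently maximize $d(M_j)$ and $\Gamma_{M_j}$, which is precisely what produces the clean decomposition into a multiplicity-weighted piece $\overline{d}(P)\,N(r,\infty;f)$ and a reduced piece $(\Gamma_P-\overline{d}(P))\,\overline{N}(r,\infty;f)$.
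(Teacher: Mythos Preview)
The paper does not supply its own proof of this lemma; it is quoted from \cite{8bc} without argument. Your pole-by-pole computation is the standard one and is correct: at a pole of $f$ of multiplicity $p$ each monomial $M_j[f]$ has a pole of order $p\,d(M_j)+(\Gamma_{M_j}-d(M_j))=(p-1)d(M_j)+\Gamma_{M_j}\le (p-1)\overline d(P)+\Gamma_P$, and summing over poles gives the two terms on the right.

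One small wrinkle in your write-up: the sentence ``recalling that the small-function coefficients $b_j$ contribute only $S(r,f)$, the sum $P[f]$ has a pole at $z_0$ of multiplicity at most $p\,\overline d(P)+(\Gamma_P-\overline d(P))$'' conflates a global $S(r,f)$ bound with a local multiplicity bound. If some $b_j$ has a pole at $z_0$, the pole order of $b_jM_j[f]$ at $z_0$ can exceed your local bound; what is true is that the aggregate excess over all such points, together with any poles of $P[f]$ at regular points of $f$, is controlled by $\sum_j N(r,\infty;b_j)=S(r,f)$. So your argument really proves the inequality with an additional $+\,S(r,f)$ on the right. The lemma as stated in the paper omits this term; either the source \cite{8bc} assumes holomorphic (or constant) coefficients, or the $S(r,f)$ is tacitly understood, as it is harmless in every application later in the paper.
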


\begin{lem}\label{l4}\cite{6} Let $f$ be a non-constant meromorphic function and let \[R(f)=\frac{\sum\limits _{i=0}^{n} a_{i}f^{i}}{\sum \limits_{j=0}^{m} b_{j}f^{j}}\] be an irreducible rational function in $f$ with constant coefficients $\{a_{i}\}$ and $\{b_{j}\}$ where $a_{n}\not=0$ and $b_{m}\not=0$. Then $$T(r,R(f))=pT(r,f)+S(r,f),$$ where $p=\max\{n,m\}$.
\end{lem}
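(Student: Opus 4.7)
The plan is to adapt the standard Brück-type argument, based on Hayman's auxiliary function $H$ of \eqref{e2.1}, to the weighted sharing hypothesis and to a possibly non-homogeneous differential polynomial $P[f]$. First I normalize: set $F := f/a$ and $G := P[f]/a$. Because $a$ is a small function of $f$, $T(r,F) = T(r,f) + S(r,f)$ and $T(r,G) = T(r,P[f]) + S(r,f)$, and the assumption that $f-a$ and $P[f]-a$ share $(0,l)$ becomes that $F$ and $G$ share $(1,l)$. The target conclusion $(P[f]-a)/(f-a)=c$ then reads $G - 1 = c(F - 1)$.

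I next split on whether $H$ vanishes. If $H \equiv 0$, then integrating twice yields
\[
\frac{1}{G - 1} = \frac{A}{F - 1} + B
\]
for constants $A \neq 0$ and $B$. Ruling out $B \neq 0$ proceeds by a short sub-case analysis on the sign and value of $B$: each configuration either produces a Picard-type exceptional value that conflicts with the deficiency terms $\delta(a,f)$, $\Theta(\infty,f)$ or $\Theta(0,f)$ appearing in \eqref{be1.1}--\eqref{be1.3}, or forces, via Lemma \ref{l3}, an incompatibility between $T(r,G)$ and $T(r,F)$. Hence $B = 0$, which is the desired conclusion.

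If $H \not\equiv 0$, I would apply the second fundamental theorem to $F$ and $G$ with target set $\{0,1,\infty\}$, absorb $m(r,H) = S(r,f)$ via the logarithmic derivative lemma, and use the weighted sharing to control $\overline{N}(r,1;F) + \overline{N}(r,1;G)$ up to the correction $\overline{N}_*(r,1;F,G)$, which is absorbed for $l \geq 2$, enters with coefficient $\tfrac12$ for $l = 1$, and more heavily for $l = 0$. The decisive additional inputs are Lemma \ref{l3}, controlling $\overline{N}(r,\infty;G)$ in terms of $\overline{N}(r,\infty;f)$, together with the monomial-multiplicity estimate
\[
N_2(r,0;P[f]) \leq \underline{d}(P)\, N_{2+\Gamma-\underline{d}(P)}(r,0;f) + (\overline{d}(P) - \underline{d}(P))\, T(r,f) + S(r,f),
\]
which follows because a zero of $f$ of order $m \geq k$ is a zero of the lowest-degree monomial of $P[f]$ of order at least $\underline{d}(P) m - (\Gamma - \underline{d}(P))$, and this is strictly positive for all $m \geq 1$ precisely under the hypothesis $\Gamma > (k+1)\underline{d}(P) - 2$. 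A companion bound for $\overline{N}(r,0;P[f])$ will be needed when $l \leq 1$. Substituting these into the second fundamental theorem and rewriting in the deficiency notation of Definition 1.7 produces an inequality that directly contradicts \eqref{be1.1}, \eqref{be1.2} or \eqref{be1.3} in the respective cases.

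The principal obstacle is the non-homogeneous bookkeeping: the $(\overline{d}(P) - \underline{d}(P))\,T(r,f)$ terms must be shown to fit precisely within the right-hand sides of \eqref{be1.2} and \eqref{be1.3}, and the auxiliary hypotheses $2\underline{d}(P) > \overline{d}(P)$ (for $l=1$) and $5\underline{d}(P) > 4\overline{d}(P)$ (for $l=0$) arise exactly so that, after substitution, the coefficient of the $\overline{N}_*(r,1;F,G)$ contribution remains strictly below the controlling quantity and the chain of inequalities closes. Once this accounting is in place, the remainder of the argument is a routine, if lengthy, computation in weighted-sharing Nevanlinna theory.
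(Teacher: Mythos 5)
There is a fundamental mismatch here: the statement you were asked to prove is Lemma~\ref{l4}, the classical Valiron--Mokhon'ko identity $T(r,R(f))=pT(r,f)+S(r,f)$ for an irreducible rational function $R(f)=\bigl(\sum_{i=0}^{n}a_{i}f^{i}\bigr)\big/\bigl(\sum_{j=0}^{m}b_{j}f^{j}\bigr)$ with constant coefficients, but your proposal is a proof sketch of Theorem~\ref{t1}, the paper's main uniqueness theorem. Nothing in your argument --- the normalization $F=f/a$, $G=P[f]/a$, the auxiliary function $H$ of \eqref{e2.1}, the case split $H\equiv 0$ versus $H\not\equiv 0$, the weighted-sharing bookkeeping for $l\geq 2$, $l=1$, $l=0$, or the deficiency inequalities \eqref{be1.1}--\eqref{be1.3} --- bears on the lemma at all. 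The lemma concerns a single meromorphic function $f$ and a fixed rational map $R$ with constant coefficients; it involves no sharing hypothesis, no small function $a$, no differential polynomial, and no second fundamental theorem. In the paper itself this lemma is quoted from the literature (cited to \cite{6}; it is the well-known result of Mokhon'ko \cite{9}) and is used as a black box, so there is no internal proof to compare against, but your text cannot serve as a proof of it in any case.

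For the record, a correct proof runs along entirely different lines: one first shows by induction on $n$ that for a polynomial $Q(f)=\sum_{i=0}^{n}a_{i}f^{i}$ with $a_{n}\neq 0$ one has $T(r,Q(f))=nT(r,f)+O(1)$, using $T(r,fg)\leq T(r,f)+T(r,g)+O(1)$ for the upper bound and a standard trick (e.g., estimating $m$ and $N$ of $Q(f)$ separately, or dividing by the leading term) for the lower bound; the rational case then follows from Mokhon'ko's argument, where the irreducibility of $R$ (no common factor of numerator and denominator) guarantees that no cancellation reduces the degree, yielding $T(r,R(f))=\max\{n,m\}\,T(r,f)+S(r,f)$. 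If your assignment was to prove Theorem~\ref{t1}, your sketch is at least aimed at the right target and follows the paper's broad strategy, but as submitted it does not address the statement in question.
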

\begin{lem} \label{l2.4}\cite{3ab} Let $f$ be a meromorphic function and $P[f]$ be a differential polynomial. Then
$$ m\left(r,\frac{P[f]}{f^{\ol {d}(P)}}\right)\leq (\ol {d}(P)-\underline {d}(P)) m\left(r,\frac{1}{f}\right)+S(r,f).$$
\end{lem}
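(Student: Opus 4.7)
The plan is to view $P[f]/f^{\overline{d}(P)}$ as a polynomial in the variable $g:=1/f$ of degree at most $\overline{d}(P)-\underline{d}(P)$ whose coefficients have proximity $S(r,f)$, and then to apply the elementary estimate
\[
\log^{+}\!\Bigl|\sum_{e=0}^{n} a_e\,g^{e}\Bigr| \le n\log^{+}|g|+\sum_{e=0}^{n}\log^{+}|a_e|+\log(n+1),
\]
which, after integration around $|z|=r$, bounds the proximity of a degree-$n$ polynomial in $g$ by $n\,m(r,g)$ plus the proximities of the coefficients.

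To set this up, I would rewrite each monomial $M_j[f]=f^{n_{0j}}(f^{(1)})^{n_{1j}}\cdots(f^{(k)})^{n_{kj}}$ of degree $d(M_j)$ in the sum $P[f]=\sum_{j=1}^{t}b_{j}M_{j}[f]$ as
\[
\frac{M_j[f]}{f^{\overline{d}(P)}} \;=\; \frac{1}{f^{\overline{d}(P)-d(M_j)}}\cdot\prod_{i=1}^{k}\!\left(\frac{f^{(i)}}{f}\right)^{n_{ij}}.
\]
By the lemma on the logarithmic derivative, the product on the right has proximity $S(r,f)$. Grouping the monomials of $P[f]$ according to the value of $e:=\overline{d}(P)-d(M_j)\in\{0,1,\dots,\overline{d}(P)-\underline{d}(P)\}$ produces a representation
\[
\frac{P[f]}{f^{\overline{d}(P)}} \;=\; \sum_{e=0}^{\overline{d}(P)-\underline{d}(P)} c_e\,(1/f)^{e},
\]
with each $c_e$ a finite $\{b_j\}$-linear combination of logarithmic-derivative products, so that $m(r,c_e)=S(r,f)$ since $T(r,b_j)=S(r,f)$ by hypothesis.

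Finally, applying the polynomial inequality above with $n=\overline{d}(P)-\underline{d}(P)$ and $g=1/f$ and integrating gives
\[
m\!\left(r,\frac{P[f]}{f^{\overline{d}(P)}}\right) \le (\overline{d}(P)-\underline{d}(P))\,m\!\left(r,\frac{1}{f}\right)+\sum_{e}m(r,c_e)+O(1),
\]
and the sum over $e$ collapses to $S(r,f)$, producing exactly the stated bound. The only real subtlety is the clean bookkeeping in the regrouping step: a naive term-by-term application of $m(r,u+v)\le m(r,u)+m(r,v)+O(1)$ yields a weaker coefficient equal to $\sum_j(\overline{d}(P)-d(M_j))$, so it is essential to treat $P[f]/f^{\overline{d}(P)}$ as a single polynomial in $1/f$ rather than as a sum of $t$ separate fractions. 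Everything else reduces to the standard logarithmic derivative lemma, so no substantive obstacle is anticipated.
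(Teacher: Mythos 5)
Your proposal is correct; the paper itself gives no proof of this lemma, citing it to Chuang, and your argument---writing $P[f]/f^{\overline{d}(P)}$ as a single polynomial of degree at most $\overline{d}(P)-\underline{d}(P)$ in $1/f$ whose coefficients have proximity $S(r,f)$ by the logarithmic derivative lemma and the smallness of the $b_j$, then integrating the elementary $\log^{+}$ estimate---is precisely the standard proof of Chuang's result. Your closing observation is also the right one: the naive term-by-term bound only yields the coefficient $\sum_j(\overline{d}(P)-d(M_j))$, so the grouping by the exponent $e=\overline{d}(P)-d(M_j)$ is exactly where the sharp constant comes from.
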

\begin{lem} \label{l5} \cite{bc1, bc2}Let $f$ be a meromorphic function and $P[f]$ be a differential polynomial. Then we have
\beas N\left(r,\infty;\frac{P[f]}{f^{\ol {d}(P)}}\right)&\leq& (\Gamma _{P}-\ol {d}(P))\;\ol N(r,\infty;f)+(\ol {d}(P)-\underline {d} (P))\; N(r,0;f\mid\geq k+1)\\&&+Q \ol N(r,0;f\mid\geq k+1)+\ol {d}(P) N(r,0;f\mid\leq k)+S(r,f).\eeas
\end{lem}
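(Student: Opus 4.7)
The plan is to estimate separately the pole contributions of $P[f]/f^{\overline d(P)}$ at the three sources where poles can arise---poles of $f$, zeros of $f$ of multiplicity $\leq k$, and zeros of $f$ of multiplicity $\geq k+1$---since the zeros and poles of the small coefficients $b_j$ only contribute $S(r,f)$. At each such point I would compute the local pole order of every monomial quotient $M_j[f]/f^{\overline d(P)}$ using the orders of $f$ and its derivatives, take the worst case over $j$, and then sum.

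For a pole of $f$ of multiplicity $p\geq 1$, $f^{(i)}$ has pole multiplicity $p+i$, so $M_j[f]$ has pole multiplicity $p\,d(M_j)+(\Gamma_{M_j}-d(M_j))$ while $f^{\overline d(P)}$ has pole multiplicity $p\,\overline d(P)$. Hence $M_j[f]/f^{\overline d(P)}$ has pole order at most $(d(M_j)-\overline d(P))p+(\Gamma_{M_j}-d(M_j))\leq \Gamma_{M_j}-\overline d(P)\leq\Gamma_P-\overline d(P)$, using $p\geq 1$ and $d(M_j)\leq\overline d(P)$. The bound is independent of $p$, so summing over distinct poles gives $(\Gamma_P-\overline d(P))\,\overline N(r,\infty;f)$.

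For a zero of $f$ of multiplicity $p\leq k$, the derivatives $f^{(i)}$ with $i\geq p$ need not vanish at $z_0$, so in the extreme case $M_j[f]$ is non-vanishing there and the pole order of $M_j[f]/f^{\overline d(P)}$ is bounded by $p\,\overline d(P)$; summing (with multiplicity $p$) yields $\overline d(P)\,N(r,0;f\mid\leq k)$. For a zero of $f$ of multiplicity $p\geq k+1$, every $f^{(i)}$ with $i\leq k$ vanishes to exact order $p-i$, so $M_j[f]$ has a zero of order $p\,d(M_j)-(\Gamma_{M_j}-d(M_j))$, and $M_j[f]/f^{\overline d(P)}$ has pole order at most $p(\overline d(P)-d(M_j))+(\Gamma_{M_j}-d(M_j))\leq p(\overline d(P)-\underline d(P))+Q$ with $Q=\Gamma_P-\underline d(P)$. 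The $p$-proportional piece sums to $(\overline d(P)-\underline d(P))\,N(r,0;f\mid\geq k+1)$ and the constant piece to $Q\,\overline N(r,0;f\mid\geq k+1)$.

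Combining these three contributions with the $S(r,f)$ from the zeros and poles of the $b_j$ yields the asserted inequality. The main bookkeeping obstacle I foresee is the shallow-zero case $p\leq k$: even when $M_j[f]$ happens to vanish to some unknown positive order at $z_0$, one must ensure that $p\,\overline d(P)$ remains a valid upper bound on the pole order of the quotient. This reduces to the observation that the order of a meromorphic ratio is bounded below by $-\mathrm{ord}(\text{denominator})$, so the per-point contribution never exceeds $p\,\overline d(P)$, giving the claimed global bound $\overline d(P)\,N(r,0;f\mid\leq k)$.
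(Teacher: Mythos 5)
Your proof is correct and follows essentially the same argument as the source the paper cites for this lemma (the paper states it without proof, quoting \cite{bc1, bc2}): a local order count of $P[f]/f^{\overline{d}(P)}$ at the three possible pole sources --- poles of $f$, zeros of $f$ of multiplicity $p\leq k$, and zeros of multiplicity $p\geq k+1$ --- done monomial by monomial, with the coefficients $b_j$ absorbed into $S(r,f)$. One caveat on the undefined constant $Q$: when the paper applies the lemma (Subcase 2.2, display (\ref{e3.14})) it takes $Q=\mu=\max_j\{\Gamma_{M_j}-d(M_j)\}$, which is in general smaller than your $Q=\Gamma_P-\underline{d}(P)$; your own intermediate bound $p\left(\overline{d}(P)-d(M_j)\right)+\left(\Gamma_{M_j}-d(M_j)\right)$ already gives this sharper constant, since $\Gamma_{M_j}-d(M_j)\leq\mu$ for every $j$, so you should simply not relax that term to $\Gamma_P-\underline{d}(P)$.
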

\begin{lem}\label{l8.5} For the differential polynomial $P[f]$,
 \beas  N(r,0;P[f]) &\leq& (\Gamma-\overline{d}(P))\ol{N}(r,\infty;f)+\underline{d}(P)\ol{N}(r,0;f)\\
 &+& (\overline{d}(P)-\underline{d}(P))\left(m(r,\frac{1}{f})+T(r,f)\right)+S(r,f).\eeas
\end{lem}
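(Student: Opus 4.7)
The plan is to apply the First Fundamental Theorem to $1/P[f]$, then estimate $T(r,P[f])$ from above and $m(r,1/P[f])$ from below, and finally regroup into the claimed form. The starting identity is
\[
N(r,0;P[f]) \;=\; T(r,P[f]) - m\!\left(r,\frac{1}{P[f]}\right) + O(1).
\]

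To bound $T(r,P[f]) = m(r,P[f]) + N(r,P[f])$ from above, I would handle the counting term with Lemma~\ref{l3}, which yields $N(r,P[f]) \leq \overline{d}(P)\,N(r,\infty;f) + (\Gamma-\overline{d}(P))\,\overline{N}(r,\infty;f)$, and the proximity term by factoring out $f^{\overline{d}(P)}$:
\[
m(r,P[f]) \leq \overline{d}(P)\,m(r,f) + m\!\left(r,\frac{P[f]}{f^{\overline{d}(P)}}\right) \leq \overline{d}(P)\,m(r,f) + (\overline{d}(P)-\underline{d}(P))\,m(r,1/f) + S(r,f),
\]
where the second inequality is Lemma~\ref{l2.4}. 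Combining these and using $T(r,f) = m(r,f) + N(r,\infty;f)$ yields
\[
T(r,P[f]) \leq \overline{d}(P)\,T(r,f) + (\Gamma-\overline{d}(P))\,\overline{N}(r,\infty;f) + (\overline{d}(P)-\underline{d}(P))\,m(r,1/f) + S(r,f).
\]

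For the matching lower bound on $m(r,1/P[f])$, I would exploit the identity
\[
\frac{1}{f^{\overline{d}(P)}} \;=\; \frac{1}{P[f]} \cdot \frac{P[f]}{f^{\overline{d}(P)}},
\]
so that subadditivity of $\log^+$ together with another application of Lemma~\ref{l2.4} gives
\[
\overline{d}(P)\,m(r,1/f) \leq m(r,1/P[f]) + (\overline{d}(P)-\underline{d}(P))\,m(r,1/f) + S(r,f),
\]
i.e.\ $m(r,1/P[f]) \geq \underline{d}(P)\,m(r,1/f) - S(r,f)$. Subtracting this lower bound from the upper bound for $T(r,P[f])$ and then splitting $\overline{d}(P)\,T(r,f) = \underline{d}(P)\,N(r,0;f) + \underline{d}(P)\,m(r,1/f) + (\overline{d}(P)-\underline{d}(P))\,T(r,f) + O(1)$ by means of $T(r,f) = N(r,0;f) + m(r,1/f) + O(1)$ regroups every term into precisely the asserted right-hand side. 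The only delicate point is the lower bound on $m(r,1/P[f])$: the gain of exactly $\underline{d}(P)\,m(r,1/f)$ depends on Lemma~\ref{l2.4} being applied tightly in both directions, since any slack in its proximity estimate for $P[f]/f^{\overline{d}(P)}$ would propagate directly into the final inequality.
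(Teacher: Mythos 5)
Your proof is correct and follows essentially the same route as the paper's: the First Fundamental Theorem applied to $P[f]$, Lemma \ref{l3} for the pole term, and Lemma \ref{l2.4} used twice --- once to bound $m\left(r,P[f]/f^{\overline{d}(P)}\right)$ from above and once to obtain the lower bound $m\left(r,1/P[f]\right)\geq \underline{d}(P)\,m(r,1/f)-S(r,f)$, which is exactly the paper's inequality (\ref{diwali1}). One caveat: your regrouping (like the paper's own final step) actually yields the term $\underline{d}(P)\,N(r,0;f)$ rather than the reduced $\underline{d}(P)\,\overline{N}(r,0;f)$ printed in the statement, so the bar there is a discrepancy in the lemma as stated rather than a defect of your argument, but you should not claim to recover "precisely" the asserted right-hand side.
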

\begin{proof} From Lemma \ref{l2.4}, it is clear that
\bea\label{diwali1}\underline{d}(P)m(r,\frac{1}{f})\leq m(r,\frac{1}{P})+S(r,f).\eea
Now using Lemmas \ref{l3}, \ref{l2.4} and (\ref{diwali1}), we have
\beas  N(r,0;P[f]) &=&  T(r,P)-m(r,\frac{1}{P})+O(1)\\
&\leq& T(r,P)-\underline{d}(P)m(r,\frac{1}{f})+S(r,f)\\
&\leq& (\overline{d}(P)-\underline{d}(P))m(r,\frac{1}{f})+\overline{d}(P)m(r,f)\\
&+& \overline{d}(P)N(r,\infty;f)+\left(\Gamma_{P}-\overline{d}(P)\right)\overline{N}(r,\infty;f)\\
&-& \underline{d}(P)m(r,\frac{1}{f})+S(r,f)\\
&\leq& \left(\Gamma_{P}-\overline{d}(P)\right)\overline{N}(r,\infty;f)+\underline{d}(P)\ol{N}(r,0;f)\\
 &+& (\overline{d}(P)-\underline{d}(P))\left(m(r,\frac{1}{f})+T(r,f)\right)+S(r,f).
\eeas
Hence the proof is completed.
\end{proof}
\begin{lem}\label{l8.55} For the differential polynomial $P[f]$,
 \beas  N(r,0;P[f]) &\leq& T(r,P)-\underline{d}(P)T(r,\frac{1}{f})+\underline{d}(P)N(r,\frac{1}{f})+S(r,f).\eeas
\end{lem}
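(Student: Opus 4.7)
The plan is to derive the bound directly from Nevanlinna's first main theorem, combined with the auxiliary estimate \eqref{diwali1} that already appears inside the proof of Lemma \ref{l8.5}. Recall that \eqref{diwali1} asserts
\[
\underline{d}(P)\,m\!\left(r,\tfrac{1}{f}\right) \leq m\!\left(r,\tfrac{1}{P[f]}\right) + S(r,f);
\]
it is itself an immediate consequence of Lemma \ref{l2.4}, obtained by applying that lemma to the factorisation $1/f^{\overline{d}(P)} = (1/P[f])\cdot (P[f]/f^{\overline{d}(P)})$ and rearranging.

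I would begin with the identity supplied by the first main theorem,
\[
N(r,0;P[f]) \;=\; T(r,P) - m\!\left(r,\tfrac{1}{P[f]}\right) + O(1).
\]
Substituting the bound from \eqref{diwali1} then yields
\[
N(r,0;P[f]) \;\leq\; T(r,P) - \underline{d}(P)\,m\!\left(r,\tfrac{1}{f}\right) + S(r,f).
\]

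To conclude, I would simply rewrite $m(r,1/f) = T(r,1/f) - N(r,1/f)$, which after rearrangement produces exactly the claimed inequality. The whole argument is a two-line reformulation of the work already carried out for Lemma \ref{l8.5}, so I anticipate no genuine obstacle; the only thing to notice is that Lemmas \ref{l8.5} and \ref{l8.55} are essentially parallel packagings of the same intermediate estimate, the former absorbing $T(r,P)$ via Lemma \ref{l3} and keeping $m(r,1/f)$, while the latter keeps $T(r,P)$ and instead expands $m = T - N$ on the $1/f$ side.
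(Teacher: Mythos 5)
Your proposal is correct and follows essentially the same route the paper intends: the paper's proof of Lemma \ref{l8.55} is just the remark ``Similar to above Lemma,'' i.e.\ start from $N(r,0;P[f])=T(r,P)-m(r,1/P)+O(1)$, apply the estimate (\ref{diwali1}) coming from Lemma \ref{l2.4}, and rewrite $m(r,1/f)=T(r,1/f)-N(r,1/f)$, exactly as you do. No issues.
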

\begin{proof}
Similar to above Lemma.
\end{proof}
\begin{lem}\label{l9.55} Let $j$ and $p$ be two positive integers satisfying $j\geq p+1$ and $ \Gamma> (k+1) \underline{d}(P)- (p+1)$. Then for the differential polynomial $P[f]$,
$$\overline{N}_{_{(j+\Gamma-\underline{d}(P)}}(r,0;f^{\underline{d}(P)}) \leq \overline{N}_{(j}(r,0;P[f]).$$
\end{lem}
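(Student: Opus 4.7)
The plan is to reduce the lemma to tracking the multiplicity of a single zero of $f$ through the monomial expansion of $P[f]$, since both sides are reduced counting functions at zeros of $f$.

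Let $z_0$ be a zero of $f$ of multiplicity $m$, counted in $\overline{N}_{(j+\Gamma-\underline{d}(P)}(r,0;f^{\underline{d}(P)})$. This simply means $m\underline{d}(P)\geq j+\Gamma-\underline{d}(P)$, equivalently $(m+1)\underline{d}(P)\geq j+\Gamma$. The first goal is to verify that $m\geq k+1$, so that every derivative appearing in $P[f]$ actually vanishes at $z_0$. Since $j\geq p+1$ and $\Gamma>(k+1)\underline{d}(P)-(p+1)$, we get $j+\Gamma>(k+1)\underline{d}(P)$, hence $(m+1)\underline{d}(P)>(k+1)\underline{d}(P)$, so $m\geq k+1$.

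Next, for each monomial $M_{\ell}[f]=f^{n_{0\ell}}(f^{(1)})^{n_{1\ell}}\cdots(f^{(k)})^{n_{k\ell}}$ appearing in $P[f]$, since $m\geq k+1>i$ for every $0\leq i\leq k$ with $n_{i\ell}>0$, the derivative $f^{(i)}$ has a zero at $z_0$ of exact multiplicity $m-i$. Therefore
\begin{equation*}
\operatorname{ord}_{z_0}(M_{\ell}[f])=\sum_{i=0}^{k}n_{i\ell}(m-i)=m\,d(M_{\ell})-\bigl(\Gamma_{M_{\ell}}-d(M_{\ell})\bigr)=(m+1)d(M_{\ell})-\Gamma_{M_{\ell}}.
\end{equation*}
Using $d(M_{\ell})\geq\underline{d}(P)$ together with $\Gamma_{M_{\ell}}\leq\Gamma$, we obtain
\begin{equation*}
(m+1)d(M_{\ell})-\Gamma_{M_{\ell}}\geq (m+1)\underline{d}(P)-\Gamma\geq(j+\Gamma)-\Gamma=j,
\end{equation*}
where the last step uses the inequality from the first step. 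Thus every term $b_{\ell}M_{\ell}[f]$ in the sum $P[f]=\sum b_{\ell}M_{\ell}[f]$ vanishes at $z_0$ to order at least $j$, so $\operatorname{ord}_{z_0}P[f]\geq j$, which places $z_0$ in the support of $\overline{N}_{(j}(r,0;P[f])$.

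Carrying this out over all $z_0$ yields the desired inequality between the reduced counting functions. I do not foresee a serious obstacle; the only mild bookkeeping point is that the argument tacitly ignores the zeros and poles of the small coefficients $b_{\ell}$, but these contribute only to $S(r,f)$ and do not affect the inequality at the level of generic zeros of $f$. The crucial algebraic identity is $(m+1)d(M_{\ell})-\Gamma_{M_{\ell}}\geq(m+1)\underline{d}(P)-\Gamma$, and the hypothesis $\Gamma>(k+1)\underline{d}(P)-(p+1)$ is exactly what is needed to force $m\geq k+1$ so that this identity becomes valid.
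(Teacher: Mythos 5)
Your proposal is correct and follows essentially the same route as the paper: the paper's Case II (deriving a contradiction from $t\leq k$) is just the contrapositive of your direct verification that $m\geq k+1$, and the paper's Case I computes the same vanishing order $\min_{\ell}\{(t+1)d(M_{\ell})-\Gamma_{M_{\ell}}\}\geq(t+1)\underline{d}(P)-\Gamma\geq j$ that you obtain monomial by monomial. Your remark about the zeros and poles of the coefficients $b_{\ell}$ is a fair observation (the paper silently ignores it), but it does not change the substance of the argument.
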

\begin{proof} Let $z_{0}$ be a zero of $f$ of order $t$. If $t \underline{d}(P)<j+\Gamma-\underline{d}(P)$, then the proof is obvious. So we assume that $t \underline{d}(P)\geq j+\Gamma-\underline{d}(P)$. Now we consider two cases:\par
\textbf{Case-I} Let us assume that $t\geq k+1$. Then $z_0$ is a zero of $P[f]$ of order atleast
\beas &&\min\limits_{j}\{n_{0j}t+n_{1j}(t-1)+\ldots+n_{kj}(t-k)\}\\
&=&\min\limits_{j}\{t dM_{j}- (\Gamma_{M_{j}}-dM_{j})\}\\
&=&(t+1)\underline{d}(P)-\max\limits_{j}\{\Gamma_{M_{j}}\}\\
&\geq& (j+\Gamma-\underline{d}(P))+\underline{d}(P)-\Gamma\geq j
\eeas
So the proof is clear.\par
\textbf{Case-II} Let us assume that $t\leq k$. Then
\beas k \underline{d}(P) &\geq& t \underline{d}(P) \geq j+\Gamma-\underline{d}(P)\\
&\geq& p+1+\Gamma-\underline{d}(P),
\eeas
which is a contradiction as $ \Gamma> (k+1) \underline{d}(P)- (p+1)$.
\end{proof}
\begin{lem}\label{l9} Let $j$ and $p$ be two positive integer satisfying $j\geq p+1$ and $ \Gamma> (k+1) \underline{d}(P)- (p+1)$. Then for a differential polynomial $P[f]$,
\beas N_{p}(r,0;P[f]) &\leq& N_{_{p+\Gamma-\underline{d}(P)}}(r,0;f^{\underline{d}(P)})+(\Gamma-\underline{d}(P))\overline{N}(r,\infty;f)\\
&&+(\overline{d}(P)-\underline{d}(P))\left(m(r,\frac{1}{f})+T(r,f)\right)+S(r,f).\eeas
\end{lem}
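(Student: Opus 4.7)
The plan is to combine Lemma \ref{l9.55} (summed over all multiplicity thresholds beyond $p$) with standard Nevanlinna-theoretic bounds on $T(r,P)$ coming from Lemmas \ref{l3}, \ref{l2.4}, and \ref{l8.55}. First I would observe that the hypothesis $\Gamma > (k+1)\underline{d}(P)-(p+1)$ is precisely the side condition required to invoke Lemma \ref{l9.55} for every integer $j\geq p+1$ with our fixed $p$. Summing the inequality $\overline{N}(r,0;f^{\underline{d}(P)}\mid \geq j+\Gamma-\underline{d}(P)) \leq \overline{N}(r,0;P[f]\mid \geq j)$ over all such $j$, both sides telescope via $N(r,0;g) = \sum_{m\geq 1}\overline{N}(r,0;g\mid\geq m)$ and $N_q(r,0;g) = \sum_{m=1}^{q}\overline{N}(r,0;g\mid\geq m)$: the left collapses to $N(r,0;f^{\underline{d}(P)}) - N_{p+\Gamma-\underline{d}(P)}(r,0;f^{\underline{d}(P)})$ and the right to $N(r,0;P[f]) - N_p(r,0;P[f])$. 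Using $N(r,0;f^{\underline{d}(P)}) = \underline{d}(P)N(r,0;f)$ and rearranging, I obtain
\[
N_p(r,0;P[f]) \leq N_{p+\Gamma-\underline{d}(P)}(r,0;f^{\underline{d}(P)}) + \bigl[N(r,0;P[f]) - \underline{d}(P)N(r,0;f)\bigr].
\]

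Next I would bound the bracketed term by $T(r,P) - \underline{d}(P)T(r,f) + S(r,f)$. Lemma \ref{l8.55} gives $N(r,0;P[f]) \leq T(r,P) - \underline{d}(P)T(r,1/f) + \underline{d}(P)N(r,1/f) + S(r,f)$; subtracting $\underline{d}(P)N(r,0;f) = \underline{d}(P)N(r,1/f)$ and using $T(r,1/f) = T(r,f)+O(1)$ yields exactly this estimate. To bound $T(r,P)$ itself, I would write $P = f^{\overline{d}(P)}\cdot (P/f^{\overline{d}(P)})$, apply Lemma \ref{l2.4} to the second factor, and invoke Lemma \ref{l3} for the pole counting term, obtaining
\[
T(r,P) \leq \overline{d}(P)T(r,f) + (\overline{d}(P)-\underline{d}(P))m(r,1/f) + (\Gamma-\overline{d}(P))\overline{N}(r,\infty;f) + S(r,f).
\]
Subtracting $\underline{d}(P)T(r,f)$ from both sides and relaxing $\Gamma-\overline{d}(P) \leq \Gamma-\underline{d}(P)$ (valid since $\overline{d}(P)\geq \underline{d}(P)$) produces a bound on the bracket of the exact shape required; substituting back into the display from the first paragraph delivers the lemma.

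The only genuinely nontrivial step is recognizing that Lemma \ref{l9.55} telescopes cleanly under the side hypothesis on $\Gamma$. Once that reduction is made, the rest is routine application of the lemma on logarithmic derivatives and the first main theorem, and I expect no obstruction beyond careful bookkeeping of the coefficients of $m(r,1/f)$, $T(r,f)$, and $\overline{N}(r,\infty;f)$.
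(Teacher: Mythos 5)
Your proof is correct and follows essentially the same route as the paper: the key step in both is telescoping Lemma \ref{l9.55} over $j\geq p+1$ and cancelling the resulting tail sums, then controlling $N(r,0;P[f])-\underline{d}(P)N(r,0;f)$ by $T(r,P)-\underline{d}(P)T(r,f)$ plus the stated error terms. The only cosmetic difference is that the paper cites its Lemma \ref{l8.5} directly where you re-derive the same estimate from Lemmas \ref{l8.55}, \ref{l2.4} and \ref{l3}.
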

\begin{proof}
From Lemmas \ref{l8.5}, \ref{l9.55}, we have
\beas && N_{p}(r,0;P[f])\\
&\leq& (\Gamma-\overline{d}(P))\ol{N}(r,\infty;f)+\ol{N}(r,0;f^{\underline{d}(P)})+(\overline{d}(P)-\underline{d}(P))(m(r,\frac{1}{f})+T(r,f))\\
&-& \sum\limits_{j=p+1}^{\infty}\ol{N}_{(j}(r,0,P[f])+S(r,f)\\
&\leq& (\Gamma-\overline{d}(P))\ol{N}(r,\infty;f)+N_{_{p+\Gamma-\underline{d}(P)}}(r,0;f^{\underline{d}(P)})+(\overline{d}(P)-\underline{d}(P))(m(r,\frac{1}{f})+T(r,f))\\
&+& \sum\limits_{j=p+\Gamma-\underline{d}(P)+1}^{\infty}\ol{N}_{(j}(r,0;f^{\underline{d}(P)})-\sum\limits_{j=p+1}^{\infty}\ol{N}_{(j}(r,0;P[f])+S(r,f)\\
&\leq& (\Gamma-\overline{d}(P))\ol{N}(r,\infty;f)+N_{_{p+\Gamma-\underline{d}(P)}}(r,0;f^{\underline{d}(P)})+(\overline{d}(P)-\underline{d}(P))(m(r,\frac{1}{f})+T(r,f))+S(r,f).
\eeas
This completes the proof.
\end{proof}
\begin{lem}\label{l9.5} Let $j$ and $p$ be two positive integer satisfying $j\geq p+1$ and $ \Gamma> (k+1) \underline{d}(P)- (p+1)$. Then for a differential polynomial $P[f]$,
\beas N_{p}(r,0;P[f]) &\leq& N_{_{p+\Gamma-\underline{d}(P)}}(r,0;f^{\underline{d}(P)})+T(r,P)-\underline{d}(P)T(r,f)+S(r,f).\eeas
\end{lem}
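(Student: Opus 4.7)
The plan is to imitate the argument of Lemma~\ref{l9} verbatim, replacing the bound on $N(r,0;P[f])$ supplied by Lemma~\ref{l8.5} with the alternative expression supplied by Lemma~\ref{l8.55}. First I would start from the elementary identity
\[
N_{p}(r,0;P[f]) = N(r,0;P[f]) - \sum_{j=p+1}^{\infty}\overline{N}_{(j}(r,0;P[f]),
\]
and apply Lemma~\ref{l8.55}, together with $T(r,1/f)=T(r,f)+O(1)$ and the identity $\underline{d}(P)\,N(r,0;f)=N(r,0;f^{\underline{d}(P)})$, to obtain
\[
N_{p}(r,0;P[f]) \le T(r,P) - \underline{d}(P)T(r,f) + N(r,0;f^{\underline{d}(P)}) - \sum_{j=p+1}^{\infty}\overline{N}_{(j}(r,0;P[f]) + S(r,f).
\]

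Next I would decompose the unrestricted zero counting function as
\[
N(r,0;f^{\underline{d}(P)}) = N_{p+\Gamma-\underline{d}(P)}(r,0;f^{\underline{d}(P)}) + \sum_{j=p+\Gamma-\underline{d}(P)+1}^{\infty}\overline{N}_{(j}(r,0;f^{\underline{d}(P)}),
\]
and absorb the tail using Lemma~\ref{l9.55}. The hypothesis $\Gamma > (k+1)\underline{d}(P)-(p+1)$ is exactly the one demanded by that lemma, so after the change of index $i\mapsto i+\Gamma-\underline{d}(P)$ it yields
\[
\sum_{j=p+\Gamma-\underline{d}(P)+1}^{\infty}\overline{N}_{(j}(r,0;f^{\underline{d}(P)}) \le \sum_{j=p+1}^{\infty}\overline{N}_{(j}(r,0;P[f]).
\]
Substituting this back into the previous inequality, the two sums indexed by zeros of $P[f]$ cancel and the advertised bound drops out.

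I do not anticipate any serious obstacle. The whole strategy, namely, starting estimate, tail splitting of $N(r,0;f^{\underline{d}(P)})$, invocation of Lemma~\ref{l9.55}, and cancellation, is exactly the scheme already executed in Lemma~\ref{l9}. The only substantive change is the choice of starting estimate: Lemma~\ref{l8.55} trades the $m(r,1/f)+T(r,f)$ and $\overline{N}(r,\infty;f)$ contributions coming from Lemma~\ref{l8.5} for the cleaner combination $T(r,P)-\underline{d}(P)T(r,f)$, which is precisely the shape required on the right-hand side of Lemma~\ref{l9.5}.
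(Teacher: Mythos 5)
Your proposal is correct and is essentially the argument the paper intends: the paper's own proof is the one-line remark that the result ``follows from Lemmas \ref{l8.55} and \ref{l9},'' i.e.\ one reruns the proof of Lemma \ref{l9} with the starting estimate of Lemma \ref{l8.5} replaced by that of Lemma \ref{l8.55}, which is exactly what you do. Your tail-splitting of $N(r,0;f^{\underline{d}(P)})$, the re-indexed application of Lemma \ref{l9.55}, and the cancellation of the two sums over zeros of $P[f]$ all match the scheme already carried out in Lemma \ref{l9}.
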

\begin{proof}
Proof follows from Lemmas \ref{l8.55} and \ref{l9}.
\end{proof}
\begin{lem}\label{l11}(\cite{bc3}) Let $F$ and $G$ share $(1,l)$ and $\overline{N}(r,\infty;F)=\overline{N}(r,\infty;G)$ and $H\not\equiv 0$, where  $F$, $G$ and $H$ are defined as earlier. 
 Then \beas N(r,\infty;H) &\leq& \overline{N}(r,\infty;F)+\overline{N}(r,0;F|\geq 2)+\overline{N}(r,0;G|\geq 2)+\overline{N}_{0}(r,0;F')+\overline{N}_{0}(r,0;G')\\
 &&+\overline{N}_{L}(r,1;F)+\overline{N}_{L}(r,1;G)+S(r,f). \eeas
 \end{lem}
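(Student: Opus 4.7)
\textbf{Proof plan for Lemma \ref{l11}.}

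The plan is to carry out a direct pole analysis of $H$. The first step is to observe that in each of $F''/F'$ and $F'/(F-1)$, a pole can occur only at a zero of $F'$, a zero of $F-1$, or a pole of $F$, and similarly for $G$. Hence every pole of $H$ must lie in one of three classes: (i) common poles of $F$ and $G$ (whose locations agree by the hypothesis $\overline N(r,\infty;F)=\overline N(r,\infty;G)$); (ii) common zeros of $F-1$ and $G-1$ (whose locations agree because $F,G$ share $(1,l)$ with $l\geq 0$, hence in particular $(1,0)$); (iii) zeros of $F'$ or $G'$ not already in (i) or (ii). A short Laurent expansion at each such point will show that $H$ has at worst a \emph{simple} pole there, so that $N(r,\infty;H)=\overline N(r,\infty;H)$, and it will be enough to bound the contribution from each class separately.

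The second step is the routine residue computations. If $F$ has a pole of order $p$ at $z_0$, then $F''/F'-2F'/(F-1)$ has principal part $(p-1)/(z-z_0)$; if $F-1$ has a zero of order $p$ at $z_0$, the same expression has principal part $-(p+1)/(z-z_0)$; and if $F'$ has a zero of order $s\geq 1$ at $z_0$ with $F(z_0)\notin\{1,\infty\}$, then $F''/F'$ has principal part $s/(z-z_0)$ while $2F'/(F-1)$ is holomorphic at $z_0$. Analogous formulas hold for $G$.

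The third step matches each class to the right-hand side. For (i), at a common pole of $F,G$ of orders $p,q$, the residue of $H$ is $p-q$, which vanishes when $p=q$; in all cases the contribution is bounded by $\overline N(r,\infty;F)$. For (ii), at a common zero of $F-1$ and $G-1$ of orders $p,q$, the residue of $H$ is $q-p$, and the $(1,l)$-sharing forces $p=q$ whenever $\min(p,q)\leq l$; hence the surviving poles are precisely those in $\overline N_L(r,1;F)+\overline N_L(r,1;G)$. For (iii), at a zero of $F'$ outside (i)--(ii) the $G$-side is holomorphic, so $H$ inherits a simple pole from the $F$-side; if additionally $F(z_0)=0$ with multiplicity $\geq 2$ the point contributes to $\overline N(r,0;F|\geq 2)$, and otherwise to $\overline N_0(r,0;F')$, with the analogous two terms arising from $G'$. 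Summing these simple-pole contributions from (i)--(iii) yields the stated inequality, while the trailing $S(r,f)$ absorbs any negligible exceptional contributions.

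The main technical obstacle is step (iii) combined with the bookkeeping in step (ii): one must verify that the weighted-sharing hypothesis $(1,l)$ really forces multiplicity agreement at every zero of $F-1$ outside $\overline N_L(r,1;F)+\overline N_L(r,1;G)$, and that the simple-pole contributions from zeros of $F'$ (respectively $G'$) split cleanly into the ``multiple zero of $F$'' part and the ``other'' part $\overline N_0(r,0;F')$, with no double-counting against classes (i) or (ii). Once this is carefully sorted out, the rest of the argument is purely a matter of adding up simple Laurent expansions, and no further machinery beyond the implicit $S(r,f)$ is needed.
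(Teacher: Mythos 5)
Your pole-by-pole Laurent analysis is exactly the standard proof of this lemma: the paper itself gives no argument (it quotes the result from \cite{bc3}), and the proof there, going back to Lahiri's weighted-sharing lemmas, is the same enumeration of the (necessarily simple) poles of $H$ at shared poles of unequal multiplicity, at $1$-points of unequal multiplicity (which the $(1,l)$-sharing confines to $\overline{N}_{L}(r,1;F)+\overline{N}_{L}(r,1;G)$), at multiple zeros of $F$ and $G$, and at the remaining zeros of $F'$, $G'$ counted by the $\overline{N}_{0}$ terms. Your residue computations check out; the only caveat is that the hypothesis $\overline{N}(r,\infty;F)=\overline{N}(r,\infty;G)$ must be read, as you do and as intended in the application ($F=f/a$, $G=P[f]/a$), as asserting that $F$ and $G$ have the same pole set up to an $S(r,f)$ discrepancy, since mere equality of the reduced counting functions would leave unmatched poles of $G$ unaccounted for by the single term $\overline{N}(r,\infty;F)$.
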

\begin{lem}\label{l12} \cite{bc3} If $F$ and $G$ share $(1,l)$, then
$$\overline{N}_{L}(r,1;F)\leq \frac{1}{2}\overline{N}(r,\infty;F)+\frac{1}{2}\overline{N}(r,0;F)+S(r,F)~~\text{when}~~l\geq 1,$$
  and
$$\overline{N}_{L}(r,1;F)\leq \overline{N}(r,\infty;F)+\overline{N}(r,0;F)+S(r,F)~~\text{when}~~l=0.$$
\end{lem}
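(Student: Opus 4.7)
My plan is to prove Lemma \ref{l12} by combining a multiplicity analysis at the $\overline{N}_L$-points with a routine logarithmic derivative estimate, obtaining a unified bound $(l+1)\overline{N}_{L}(r,1;F)\leq \overline{N}(r,0;F)+\overline{N}(r,\infty;F)+S(r,F)$ from which both stated inequalities follow.

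First, I would extract the multiplicity information forced by weighted sharing. If $z_0$ is counted in $\overline{N}_{L}(r,1;F)$, let $p,q$ denote the multiplicities of $F-1$ and $G-1$ at $z_0$, so $p>q\geq 1$. By the definition of $(1,l)$-sharing, a 1-point of $F$ of multiplicity $\leq l$ must agree with the multiplicity of $G$ at that point; since $p\neq q$, this forces $p\geq l+1$ and $q\geq l+1$, hence $p\geq l+2$ when $l\geq 1$, while only $p\geq 2$ is available when $l=0$. Consequently $F'$ vanishes at $z_0$ to order $p-1$, so the total contribution of $\overline{N}_L$-points to $N(r,0;F'\mid F=1)$ is $\sum(p-1)$, which is at least $(l+1)\overline{N}_{L}(r,1;F)$ when $l\geq 1$, and at least $\overline{N}_{L}(r,1;F)$ when $l=0$.

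Next, I would estimate $N(r,0;F'\mid F=1)$ via the auxiliary function $F'/F$. Since $F$ takes the value $1$ (neither $0$ nor $\infty$) at the relevant points, they are zeros of $F'/F$, giving $N(r,0;F'\mid F=1)\leq N(r,0;F'/F)\leq T(r,F'/F)+O(1)$. The lemma on the logarithmic derivative yields $m(r,F'/F)=S(r,F)$, and $F'/F$ has only simple poles located at zeros and poles of $F$, so $N(r,F'/F)=\overline{N}(r,0;F)+\overline{N}(r,\infty;F)$. Thus $T(r,F'/F)\leq \overline{N}(r,0;F)+\overline{N}(r,\infty;F)+S(r,F)$.

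Chaining the two steps gives $(l+1)\overline{N}_{L}(r,1;F)\leq \overline{N}(r,0;F)+\overline{N}(r,\infty;F)+S(r,F)$ for $l\geq 1$; dividing by $l+1\geq 2$ yields the bound $\tfrac{1}{2}\overline{N}(r,\infty;F)+\tfrac{1}{2}\overline{N}(r,0;F)+S(r,F)$ as claimed (and in fact proves the stronger $\tfrac{1}{l+1}$ version). For $l=0$ the same chain, with the unit coefficient, delivers $\overline{N}_{L}(r,1;F)\leq \overline{N}(r,0;F)+\overline{N}(r,\infty;F)+S(r,F)$. The only step requiring genuine care is the multiplicity bookkeeping from the weighted sharing definition; once that is in place, the rest is an elementary application of the logarithmic derivative lemma applied to $F'/F$.
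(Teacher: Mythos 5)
Your proof is correct and follows essentially the same route as the proof in the cited source \cite{bc3} that the paper invokes for this lemma: weighted sharing forces $q\geq l+1$, hence $p\geq l+2$, so the points counted in $\overline{N}_{L}(r,1;F)$ are zeros of $F'$ (equivalently of $F'/F$) of multiplicity at least $l+1$, and the bound follows from $T\left(r,F'/F\right)\leq \overline{N}(r,0;F)+\overline{N}(r,\infty;F)+S(r,F)$ by the lemma on the logarithmic derivative. Your unified coefficient $\tfrac{1}{l+1}$ is in fact slightly sharper than the stated $\tfrac12$ for $l\geq 2$, and the only imprecision is cosmetic: the sharing definition directly forces $q\geq l+1$, and $p\geq l+2$ then comes from $p>q$, not from ``$p\geq l+1$ and $q\geq l+1$'' alone.
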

\begin{lem}\label{l13}\cite{bc3} Let $F$ and $G$ share $(1,l)$ and $H \not\equiv 0$. Then \beas \overline{N}(r,1;F)+ \overline{N}(r,1;G) &\leq& N(r,\infty;H) + \overline{N}^{(2}_{E}(r,1;F)+\overline{N}_{L}(r,1;F)+\overline{N}_{L}(r,1;G)\\ && +\overline{N}(r,1;G)+S(r,f).\eeas
\end{lem}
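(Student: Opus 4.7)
The plan is to show that the inequality reduces, after a purely combinatorial expansion of both sides in terms of the four disjoint classes of $1$-points, to the single estimate $N^{1)}_{E}(r,1;F)\leq N(r,\infty;H)+S(r,f)$, which is in turn obtained from the classical observation that $H$ vanishes at every \emph{simple} common $1$-point of $F$ and $G$.

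First, since $F$ and $G$ share $(1,l)$, in particular they share $1$ IM, so the set of $1$-points of $F$ coincides with the set of $1$-points of $G$. I would partition this common zero set of $F-1$ and $G-1$ according to the pair of multiplicities $(p,q)$ into four classes: $p=q=1$ (counted by $N^{1)}_{E}(r,1;F)$), $p=q\geq 2$ (counted by $\overline{N}^{(2}_{E}(r,1;F)$), $p>q\geq 1$ (counted by $\overline{N}_{L}(r,1;F)$), and $q>p\geq 1$ (counted by $\overline{N}_{L}(r,1;G)$). Then both $\overline{N}(r,1;F)$ and $\overline{N}(r,1;G)$ equal the sum of these four terms, so
\beas
\overline{N}(r,1;F)+\overline{N}(r,1;G)=2N^{1)}_{E}(r,1;F)+2\overline{N}^{(2}_{E}(r,1;F)+2\overline{N}_{L}(r,1;F)+2\overline{N}_{L}(r,1;G).
\eeas
Substituting the analogous expansion of $\overline{N}(r,1;G)$ into the right-hand side of the claim and using $N^{1)}_{E}(r,1;F)=N^{1)}_{E}(r,1;G)$, all the $\overline{N}^{(2}_{E}$ and $\overline{N}_{L}$ terms cancel, and the stated inequality is seen to be equivalent to
\beas
N^{1)}_{E}(r,1;F)\leq N(r,\infty;H)+S(r,f).
\eeas

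Next, I would establish this residual estimate by the standard local computation. At a point $z_{0}$ with $p=q=1$, writing $F-1=a_{1}(z-z_{0})+a_{2}(z-z_{0})^{2}+\cdots$ with $a_{1}\neq 0$, a direct Taylor expansion gives
\beas
\frac{F^{''}}{F^{'}}=\frac{2a_{2}}{a_{1}}+O(z-z_{0}),\qquad \frac{2F^{'}}{F-1}=\frac{2}{z-z_{0}}+\frac{2a_{2}}{a_{1}}+O(z-z_{0}),
\eeas
so the constant terms cancel internally and $\frac{F^{''}}{F^{'}}-\frac{2F^{'}}{F-1}=-\frac{2}{z-z_{0}}+O(z-z_{0})$. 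The identical computation for $G$ yields $-\frac{2}{z-z_{0}}+O(z-z_{0})$. Subtracting, the simple-pole parts cancel and the constant parts (being $0$ in each expression) do as well, so $H$ is holomorphic at $z_{0}$ with $H(z_{0})=0$. Hence every simple common $1$-point is a zero of $H$, giving $N^{1)}_{E}(r,1;F)\leq N(r,0;H)$.

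Finally, because $H$ is a sum of logarithmic derivatives of functions of the form $F^{'}$, $F-1$, $G^{'}$, $G-1$, the Nevanlinna lemma on logarithmic derivatives gives $m(r,H)=S(r,f)$. Combined with the first fundamental theorem, $N(r,0;H)\leq T(r,H)+O(1)=N(r,\infty;H)+m(r,H)+O(1)=N(r,\infty;H)+S(r,f)$, and chaining with the previous inequality completes the proof. The only delicate point is the Taylor-expansion bookkeeping ensuring that the constant terms of $\frac{F^{''}}{F^{'}}$ and $\frac{2F^{'}}{F-1}$ truly agree at a simple $1$-point; once this is verified the rest is straightforward cancellation and a single application of the logarithmic derivative lemma.
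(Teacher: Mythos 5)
Your proof is correct and is essentially the same argument as the one behind this lemma, which the paper itself only quotes from \cite{bc3}: the four-class decomposition of the common $1$-points reducing everything to $N^{1)}_{E}(r,1;F)\leq N(r,\infty;H)+S(r,f)$, the local expansion showing $H$ is holomorphic with $H(z_{0})=0$ at every simple common $1$-point, and then $N^{1)}_{E}(r,1;F)\leq N(r,0;H)\leq N(r,\infty;H)+m(r,H)+O(1)$ with $m(r,H)=S(r,f)$ by the lemma on logarithmic derivatives. Your Taylor bookkeeping is also right: at a simple $1$-point both $\frac{F^{''}}{F^{'}}$ and $\frac{2F^{'}}{F-1}$ have constant term $\frac{2a_{2}}{a_{1}}$, so each bracket in $H$ equals $-\frac{2}{z-z_{0}}+O(z-z_{0})$ and the claim follows.
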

\section {Proof of the theorem}
\begin{proof} [Proof of Theorem \ref{t1}]
Let $F=\frac{f}{a}$ and $G=\frac{P[f]}{a}$. Then $F-1=\frac{f-a}{a}$, $G-1=\frac{P[f]-a}{a}$. Since $f$ and $P[f]$ share $(a,l)$, it follows that $F$ and $G$ share $(1,l)$ except the zeros and poles of $a(z)$. Now we consider the following cases.\\
{\bf Case 1} Let $H\not\equiv 0$.\\
\textbf{Subcase-1.1.} Assume $l\geq 1$. Using the Second Fundamental Theorem and Lemmas \ref{l13}, \ref{l11} we get
\bea\nonumber T(r,F)+T(r,G) &\leq& \overline{N}(r,\infty;F)+\overline{N}(r,\infty;G)+\overline{N}(r,0;F)+\overline{N}(r,0;G)+N(r,H) \\
&& \nonumber+ \overline{N}^{(2}_{E}(r,1;F)+\overline{N}_{L}(r,1;F)+\overline{N}_{L}(r,1;G)+\overline{N}(r,1;G)\\
&& \nonumber
-\overline{N}_{0}(r,0;F^{'})-\overline{N}_{0}(r,0;G^{'})+S(r,f)\\
& \label{t2} \leq& 2\overline{N}(r,\infty;F)+\overline{N}(r,\infty;G)+N_{2}(r,0;F)+N_{2}(r,0;G) +\overline{N}^{(2}_{E}(r,1;F)\\
&& \nonumber +2\overline{N}_{L}(r,1;F)+2\overline{N}_{L}(r,1;G)+\overline{N}(r,1;G) +S(r,f).\eea
\textbf{Subsubcase-1.1.1.} Next assume $l\geq 2$. Now by using the inequality (\ref{t2}) and Lemma \ref{l9}, we get
\beas &&T(r,F)+T(r,G)\\
&\leq& 2\overline{N}(r,\infty;F)+\overline{N}(r,\infty;G)+N_{2}(r,0;F)+N_{2}(r,0;G) +\overline{N}^{(2}_{E}(r,1;F)\\
&&+2\overline{N}_{L}(r,1;F)+2\overline{N}_{L}(r,1;G)+\overline{N}(r,1;G)+S(r,f)\\
& \leq& 3\overline{N}(r,\infty;f)+N_{2}(r,0;f)+N_{2}(r,0;G)+ N(r,1;F)+S(r,f)\\
&\leq & 3\overline{N}(r,\infty;f)+N_{2}(r,0;f)+N_{_{2+\Gamma-\underline{d}(P)}}(r,0;f^{\underline{d}(P)})+T(r,P)-\underline{d}(P)T(r,f)\\
&&+T(r,f)-m(r,\frac{1}{f-a})+S(r,f).\eeas
i.e., for any $\varepsilon > 0$
\beas && \underline{d}(P)T(r,f)\\
& \leq & \{4-3\Theta(\infty,f)-\delta_{2}(0,f)+\underline{d}(P)-\underline{d}(P)\delta_{_{2+\Gamma-\underline{d}(P)}}(0;f)+\delta(a,f)+\varepsilon\}T(r,f)+S(r,f).\eeas
i.e.,
\beas 3\Theta(\infty,f)+\delta_{2}(0,f)+\underline{d}(P)\delta_{_{2+\Gamma-\underline{d}(P)}}(0;f)+\delta(a,f)
\leq 4,\eeas which is a contradicts (\ref{be1.1}) of Theorem \ref{t1}.\\
\textbf{Subsubcase-1.1.2.} Next we assume $l=1$. Now inequality (\ref{t2}) and in view of Lemmas \ref{l12}, \ref{l9} and \ref{l9.5}, we get
\beas &&T(r,F)+T(r,G)\\
&\leq& 2\overline{N}(r,\infty;F)+\overline{N}(r,\infty;G)+N_{2}(r,0;F)+N_{2}(r,0;G) +\overline{N}^{(2}_{E}(r,1;F)\\
&& +2\overline{N}_{L}(r,1;F)+2\overline{N}_{L}(r,1;G)+\overline{N}(r,1;G)+S(r,f)\\
& \leq& 2\overline{N}(r,\infty;F)+\frac{3}{2}\overline{N}(r,\infty;G)+\frac{1}{2}\overline{N}(r,0;G)+N_{2}(r,0;f)+N_{2}(r,0;G)\\
&&+\overline{N}^{(2}_{E}(r,1;F)+2\overline{N}_{L}(r,1;F)+\overline{N}_{L}(r,1;G)+\overline{N}(r,1;G)+S(r,f)\\
& \leq& \frac{7}{2}\overline{N}(r,\infty;f)+\frac{1}{2}N_{1}(r,0;G)+N_{2}(r,0;f)+N_{2}(r,0;G)+N(r,1;F)+S(r,f)\\
& \leq& \frac{7+\Gamma-\underline{d}(P)}{2}\overline{N}(r,\infty;f)+\frac{1}{2}N_{_{1+\Gamma-\underline{d}(P)}}(r,0;f^{\underline{d}(P)})+
N_{_{2+\Gamma-\underline{d}(P)}}(r,0;f^{\underline{d}(P)})+N_{2}(r,0;f)\\
&&+\frac{\overline{d}(P)-\underline{d}(P)}{2}(m(r,\frac{1}{f})+T(r,f))+T(r,P)-\underline{d}(P)T(r,f)+T(r,f)-m(r,\frac{1}{f-a})+S(r,f).\eeas
i.e.,
\beas &&(2\underline{d}(P)-\overline{d}(P))T(r,f)\\
& \leq& \frac{7+\Gamma-\underline{d}(P)}{2}\overline{N}(r,\infty;f)+\frac{1}{2}N_{_{1+\Gamma-\underline{d}(P)}}(r,0;f^{\underline{d}(P)})+
N_{_{2+\Gamma-\underline{d}(P)}}(r,0;f^{\underline{d}(P)})\\
&&+N_{2}(r,0;f)-m(r,\frac{1}{f-a})+S(r,f).\eeas
i.e., for any $\varepsilon > 0$
\beas &&(2\underline{d}(P)-\overline{d}(P))T(r,f)\\
& \leq& \{(1+\underline{d}(P)+\frac{7+\Gamma}{2})-\frac{7+\Gamma-\underline{d}(P)}{2}\Theta(\infty,f)-\frac{\underline{d}(P)}{2}\delta_{_{1+\Gamma-\underline{d}(P)}}(r,0;f^{\underline{d}(P)})\\
&&-\underline{d}(P)\delta_{_{2+\Gamma-\underline{d}(P)}}(r,0;f^{\underline{d}(P)})-\delta_{2}(0,f)-\delta(a,f)+\varepsilon\}T(r,f)+S(r,f).\eeas
i.e.,
\beas &&\frac{7+\Gamma-\underline{d}(P)}{2}\Theta(\infty,f)+\frac{\underline{d}(P)}{2}\delta_{_{1+\Gamma-\underline{d}(P)}}(r,0;f^{\underline{d}(P)})+
\underline{d}(P)\delta_{_{2+\Gamma-\underline{d}(P)}}(r,0;f^{\underline{d}(P)})\\
&&+\delta_{2}(0,f)+\delta(a,f)\leq \frac{9+\Gamma}{2}+\overline{d}(P)-\underline{d}(P),
\eeas
which is a contradicts (\ref{be1.2}) of Theorem \ref{t1}.
\\
\textbf{Subcase-1.2.} Assume $l=0$. Then by using the Second Fundamental Theorem and Lemma \ref{l13}, \ref{l11}, \ref{l12}, \ref{l9} and \ref{l9.5}, we get
\bea \nonumber &&T(r,F)+T(r,G)\\
\nonumber&\leq& \overline{N}(r,\infty;F)+\overline{N}(r,0;F)+\overline{N}(r,1;F)+\overline{N}(r,\infty;G) +\overline{N}(r,0;G)+\overline{N}(r,1;G)\\
\nonumber &-&\overline{N}_{0}(r,0;F')-\overline{N}_{0}(r,0;G')+S(r,F)+S(r,G)\\
\nonumber & \leq & \overline{N}(r,\infty;F)+\overline{N}(r,0;F)+\overline{N}(r,\infty;G)+\overline{N}(r,0;G)+N(r,\infty;H)+\overline{N}^{(2}_{E}(r,1;F)\\
\nonumber &+&\overline{N}_{L}(r,1;F)+\overline{N}_{L}(r,1;G)+\overline{N}(r,1;G) -\overline{N}_{0}(r,0;F')-\overline{N}_{0}(r,0;G')+S(r,f)\\
\nonumber & \leq& 2\overline{N}(r,\infty;F)+\overline{N}(r,\infty;G)+N_{2}(r,0;F)+N_{2}(r,0;G)+\overline{N}^{(2}_{E}(r,1;F)\\
\nonumber &+&2\overline{N}_{L}(r,1;F)+2\overline{N}_{L}(r,1;G)+\overline{N}(r,1;G)+S(r,f) \\
\nonumber & \leq & 2\overline{N}(r,\infty;F)+\overline{N}(r,\infty;G)+N_{2}(r,0,f)+N_{2}(r,0;G)+\overline{N}(r,\infty;F)+\overline{N}(r,0;F)\\
\nonumber &+&2(\overline{N}(r,\infty;G)+\overline{N}(r,0;G))+\overline{N}^{(2}_{E}(r,1;F)+\overline{N}_{L}(r,1;F)+\overline{N}(r,1;G)+S(r,f)\\
\nonumber&\leq&  6\overline{N}(r,\infty;f)+N_{2}(r,0,f)+N_{2}(r,0;G)+2\overline{N}(r,0;G)+\overline{N}(r,0;F)+N(r,1;F)+S(r,f)\\
\nonumber&\leq&  6\overline{N}(r,\infty;f)+N_{2}(r,0,f)+\overline{N}(r,0;f)+N_{_{2+\Gamma-\underline{d}(P)}}(r,0;f^{\underline{d}(P)})+T(r,P)-\underline{d}(P)T(r,f)\\
\nonumber&+&2N_{_{1+\Gamma-\underline{d}(P)}}(r,0;f^{\underline{d}(P)})+2(\Gamma-\underline{d}(P))\overline{N}(r,\infty;f)+2(\overline{d}(P)-\underline{d}(P))\left(m(r,\frac{1}{f})+T(r,f)\right)\\
\nonumber&+&T(r,f)-m(r,\frac{1}{f-a})+S(r,f)
\eea
i.e., for any $\varepsilon > 0$
\beas && (5\underline{d}(P)-4\overline{d}(P))T(r,f)\\
&\leq& 2(\Gamma-\underline{d}(P)+3)\overline{N}(r,\infty;f)+\overline{N}(r,0;f)+N_{2}(r,0,f)+N_{_{2+\Gamma-\underline{d}(P)}}(r,0;f^{\underline{d}(P)})\\
\nonumber && +2N_{_{1+\Gamma-\underline{d}(P)}}(r,0;f^{\underline{d}(P)})-m(r,\frac{1}{f-a})+S(r,f)\\
\nonumber&\leq& \{2(\Gamma+4)+\underline{d}(P)-2(\Gamma-\underline{d}(P)+3)\Theta(\infty,f)-\Theta(0,f)-\delta_{2}(0,f)-\delta(a,f)\\
\nonumber&& -\underline{d}(P)\delta_{_{2+\Gamma-\underline{d}(P)}}(r,0;f^{\underline{d}(P)})
-2\underline{d}(P)\delta_{_{1+\Gamma-\underline{d}(P)}}(r,0;f^{\underline{d}(P)})+\varepsilon\}T(r,f)+S(r,f).\eeas
i.e.,
\beas && 2(\Gamma-\underline{d}(P)+3)\Theta(\infty,f)+\Theta(0,f)+\delta_{2}(0,f)+\underline{d}(P)\delta_{_{2+\Gamma-\underline{d}(P)}}(r,0;f^{\underline{d}(P)})\\
&& +2\underline{d}(P)\delta_{_{1+\Gamma-\underline{d}(P)}}(r,0;f^{\underline{d}(P)})+\delta(a,f)\leq 2(\Gamma+4)+4(\overline{d}(P)-\underline{d}(P)),\eeas
which is a contradicts (\ref{be1.3}) of Theorem \ref{t1}.\\
\\
{\bf Case 2} If $H\equiv 0$, then on integration, we get  \be\label{e3.8}\frac{1}{F-1}\equiv\frac{C}{G-1}+D,\ee where $C$, $D$ are constants and $C\not=0$. From (\ref{e3.8}) it is clear that $F$ and $G$ share $1$ CM.
We first assume that $D\not=0$. Then by (\ref{e3.8}) we get \be\label{e3.8a}\ol N(r,\infty;f)=S(r,f).\ee
Now we can write (\ref{e3.8}) as \be\label{e3.9}\frac{1}{F-1}=\frac{D\left(G-1+\frac{C}{D}\right)}{G-1}\ee
Consequently, \be\label{e3.10}\ol N\left(r,1-\frac{C}{D};G\right)=\ol N(r,\infty;F)=\ol N(r,\infty;G)=S(r,f).\ee
\textbf{Subcase-2.1} If $\frac{C}{D}\not=1$, by the second fundamental theorem, Lemma \ref{l9.5}, we have
\beas T(r,G) &\leq& \ol N(r,\infty;G)+ N_{1}(r,0;G)+\ol N\left(r,1-\frac{C}{D};G\right)+S(r,G)\\&\leq&\ol N(r,0;G)+S(r,f)\leq N_{2}(r,0;G)+S(r,f)\\
&\leq&  N_{_{2+\Gamma-\underline{d}(P)}}(r,0;f^{\underline{d}(P)})+T(r,P)-\underline{d}(P)T(r,f)+S(r,f)\eeas
That is, $\delta_{_{1+\Gamma-\underline{d}(P)}}(r,0;f^{\underline{d}(P)})=\delta_{_{2+\Gamma-\underline{d}(P)}}(r,0;f^{\underline{d}(P)})=0$. Also $\Theta(\infty,f)=1$.\par
Now rest part is same as \emph{Subcase 1.1.} of Proof of Theorem 1.3 in \cite{ly}.\par
\textbf{Subcase-2.2} If $\frac{C}{D}=1$, we get from (\ref{e3.8})
\bea\label{e3.12} \left(F-1-\frac{1}{C}\right)G\equiv -\frac{1}{C}.\eea
i.e., \bea\label{e3.12o} \frac{1}{f^{\ol {d}(P)}\left(f-(1+1/C)a\right)}\equiv -\;\frac{C}{a^{2}}\;\;\frac{P[f]}{f^{\ol {d}(P)}}.\eea
From (\ref{e3.12}) it follows that \be \label{e3.13} N(r,0;f\mid \geq k+1)\leq N(r,0;P[f])\leq N(r,0;G)\leq N(r,0;a)=S(r,f).\ee
Applying the first fundamental theorem, (\ref{e3.8a}), (\ref{e3.13}), (\ref{e3.12o}) and Lemmas \ref{l2.4}, \ref{l5}, we get that
\bea\label{e3.14} &&(n+\ol {d}(P))T(r,f)\\
\nonumber&=& T\left(r,\frac{1}{f^{\ol {d}(P)}(f-(1+\frac{1}{C})a)}\right)+S(r,f)\\
\nonumber&\leq& m\left(r,\frac{P[f]}{f^{\ol {d}(P)}}\right)+N\left(r,\frac{P[f]}{f^{\ol {d}(P)}}\right)+S(r,f)
\nonumber\\&\leq& (\ol {d}(P)-\underline {d}(P)) \left[T(r,f)-\{N(r,0;f\mid\leq k)+N(r,0;f\mid \geq k+1)\}\right]+(\ol {d}(P)-\underline {d}(P))\nonumber\\&& N(r,0;f\mid\geq k+1)+\mu \;\ol N(r,0;f\mid\geq k+1)+\ol {d}(P)N(r,0;f\leq k)+S(r,f)\nonumber\\&\leq& (\ol {d}(P)-\underline {d}(P)) T(r,f)+\underline {d}(P) N(r,0;f\mid\leq k)+S(r,f).\nonumber\eea
From (\ref {e3.14}) it follows that \beas nT(r,f)\leq S(r,f),\eeas which is impossible.\\
Hence $D=0$ and so $\frac{G-1}{F-1}=C$ or $\frac{P[f]-a}{f-a}=C$. This proves the theorem. \end{proof}
\begin{center} {\bf Acknowledgement} \end{center}
The authors also wish to thank the referee for his/her valuable remarks and suggestions towards the improvement of the paper.

\end{document}